\date{May 27, 2010. Revised September 16, 2010.}

\documentclass[a4paper,10pt,reqno]{amsart}
\usepackage{latexsym,amsmath,amsfonts,amscd,amssymb}
\usepackage{graphics}
\usepackage[all]{xy}
\usepackage{booktabs}

\textwidth 6.2in \oddsidemargin.1in \evensidemargin.1in
\parskip.15cm
\textheight22.5cm
\itemsep.05cm

\newcommand\N{\mathcal{N}}

\newcommand\Q{\mathbb{Q}}
\newcommand\R{\mathbb{R}}
\newcommand\C{\mathbb{C}}
\newcommand\T{\mathbb{T}}
\renewcommand\P{\mathbb{P}}
\newcommand\F{\mathbb{F}}
\newcommand\g{\mathfrak{g}}

\newcommand\cV{\mathcal{V}}
\newcommand\cW{\mathcal{W}}

\newcommand\rk{\mathrm{rank}}
\newcommand\im{\mathrm{im}}
\newcommand\f{\varphi}

\def\l{\ell}
\newcommand{\ox}{\otimes}
\newcommand{\x}{\times}
\newcommand{\too}{\longrightarrow}
\newcommand{\bk}{\mathbf{k}}
\newcommand{\la}{\langle}
\newcommand{\ra}{\rangle}
\newcommand{\inc}{\hookrightarrow}

\newtheorem{teo}{Theorem}
\newtheorem{lemma}[teo]{Lemma}

\newtheorem{defi}[teo]{Definition}
\newtheorem{cor}[teo]{Corollary}
\newtheorem{rem}[teo]{Remark}
\theoremstyle{definition}

\newcommand\fine{\hfill$\square$\vskip 0.2 cm}

\author[G. Bazzoni]{Giovanni Bazzoni}
\address{Instituto de Ciencias Matem\'aticas
CSIC-UAM-UC3M-UCM, Consejo Superior de Investigaciones Cient\'{\i}ficas,
Serrano 113 bis, 28006 Madrid, Spain}

\email{gbazzoni@icmat.es}

\author[V. Mu\~{n}oz]{Vicente Mu\~{n}oz}
\address{Facultad de
Matem\'aticas, Universidad Complutense de Madrid, Plaza de Ciencias
3, 28040 Madrid, Spain}

\email{vicente.munoz@mat.ucm.es}

\subjclass[2000]{Primary: 55P62, 17B30. Secondary: 22E25.}
\keywords{Nilmanifolds, rational homotopy, nilpotent Lie algebras, minimal model.}
\thanks{Partially supported by Spanish grant MICINN ref.\ MTM2007-63582.}

\title[Minimal algebras over any field]{Classification of
minimal algebras over any field up to dimension $6$}

\begin{document}

\maketitle

\begin{abstract}
We give a classification of minimal algebras generated in degree $1$,
defined over any field $\bk$ of characteristic different from $2$,
 up to dimension $6$. This recovers the classification of nilpotent
 Lie algebras over $\bk$ up to dimension $6$.
  In the case of a field $\bk$ of characteristic zero, we obtain the
 classification of nilmanifolds of dimension less than or equal to
 $6$, up to $\bk$-homotopy type.
Finally, we determine which rational homotopy types of such nilmanifolds
carry a symplectic structure.
\end{abstract}

\section{Introduction and Main Results} \label{sec:intro}

Let $X$ be a nilpotent space of the homotopy type of a CW-complex of finite type over $\Q$ (all spaces considered
hereafter are of this kind). A space is nilpotent if $\pi_1(X)$ is a nilpotent group and it acts in a
nilpotent way on $\pi_k(X)$ for $k>1$.
The rationalization of $X$ (see \cite{FHT}, \cite{GM}) is a rational space $X_\Q$ (i.e. a
space whose homotopy groups are rational vector spaces) together with a map $X\to X_\Q$ inducing isomorphisms
$\pi_k(X)\ox\Q\stackrel{\cong}{\to}\pi_k(X_\Q)$ for $k\geq 1$ (recall that the rationalization of a nilpotent
group is well-defined \cite{GM}).
Two spaces $X$ and $Y$ have the same rational homotopy type if their rationalizations $X_\Q$ and $Y_\Q$ have the same
homotopy type, i.e. if there exists a map $X_\Q\to Y_\Q$ inducing isomorphisms in homotopy groups.

\medskip

The theory of minimals models developed by Sullivan \cite{S} allows to classify rational homotopy types
algebraically. In fact, Sullivan constructed a $1-1$ correspondence between nilpotent rational spaces and isomorphism classes of minimal algebras over $\Q$:
\begin{equation}\label{eq:Sullivan}
X \leftrightarrow (\wedge V_X,d)\,.
\end{equation}

Recall that, in general, a minimal algebra is a commutative differential graded algebra (CDGA henceforth) $(\wedge V,d)$ over a field $\bk$ of characteristic different from 2 in which
\begin{enumerate}
 \item $\wedge V$ denotes the free commutative algebra generated by the graded vector space $V=\oplus V^i$; 
 \item there exists a basis $\{ x_\tau,
 \tau\in I\}$, for some well ordered index set $I$, such that
 $\deg(x_\mu)\leq \deg(x_\tau)$ if $\mu < \tau$ and each $d
 x_\tau$ is expressed in terms of preceding $x_\mu$ ($\mu<\tau$).
 This implies that $dx_\tau$ does not have a linear part.
\end{enumerate}
In the above formula (\ref{eq:Sullivan}), $(\wedge V_X,d)$ is known as the minimal model of $X$. Hence, $X$ and $Y$ have the same rational homotopy type if and only if they have isomorphic minimal models (as CDGAs over $\Q$).

\medskip

The notion of \textit{real} or \textit{complex} homotopy type already appears in the literature
(cf.\ \cite{DGMS} and \cite{Morgan}): two manifolds $M_1, M_2$ have the same
real (resp.\ complex) homotopy type if the corresponding CDGAs of real (resp. complex) differential forms $(\Omega^*(M_1),d)$ and $(\Omega^*(M_2),d)$ have the same homotopy type, i.e. can be joined by a chain of morphisms inducing isomorphisms on cohomology (quasi-isomorphisms henceforth). This is equivalent to say that the two CDGAs have the same real (resp. complex) minimal model. It is convenient to remark (\cite{FHT}, \S 11(d)) that, if $(\wedge V,d)$ is the rational minimal model of $M$, then $(\wedge V\ox_\Q\R,d)$ is the real minimal model of $M$. Recall that, given a CDGA $A$ over a field $\bk$, a minimal model of $A$ is a minimal $\bk$-algebra $(\wedge V,d)$ together with a quasi-isomorphism $(\wedge V,d)\stackrel{\simeq}{\to} A$. While the minimal model of a CDGA over a field $\bk$ with $\textrm{char}(\bk)=0$ is unique up to isomorphism, the same result for arbitrary characteristic is unknown (see the appendix in which we prove uniqueness for the special case of minimal algebras treated in this paper).

\medskip

We generalize this notion to an arbitrary field $\bk$ of characteristic zero. Note that $\Q\subset \bk$.
\begin{defi}
 Let $\bk$ be a field of characteristic zero. The $\bk$-minimal model of a space $X$ is $(\wedge V_X\otimes\bk,d)$.
 We say that $X$ and $Y$ have the same
 $\bk$-homotopy type if and only if the $\bk$-minimal models $(\wedge V_X\ox \bk,d)$ and $(\wedge V_Y\ox \bk,d)$ are
 isomorphic.
\end{defi}

Note that if $\bk_1\subset \bk_2$, then the fact that $X$ and $Y$ have the same $\bk_1$-homotopy type implies that $X$ and $Y$ have the same $\bk_2$-homotopy type.

\medskip

Recall that a nilmanifold is a quotient $N=G/\Gamma$ of a nilpotent connected Lie group by a discrete co-compact
subgroup (i.e. the resulting quotient is compact). The minimal model of $N$ is precisely the Chevalley-Eilenberg complex $(\wedge\g^*,d)$ of the nilpotent Lie algebra $\g$ of $G$ (see \cite{Nomizu}). Here, $\g^*=\hom(\g,\Q)$ is assumed to be concentrated in degree 1 and the differential $d:\g^*\to \wedge^2\g^*$ reflects the Lie bracket via the pairing
 $$ 
dx(X,Y)=-x([X,Y]), \quad x\in\g^*, \ X,Y\in\g.
 $$ 
Indeed, consider a basis $\{X_i\}$ of $\g$, such that
\begin{equation}\label{eq:minimal-alg-lie}
  [X_j,X_k]= \sum_{i<j,k} a_{jk}^i \, X_i \, .
\end{equation}
Let $\{x_i\}$ be the dual basis for $\g^*$, so that
$a_{jk}^i=x_i([X_j,X_k])$. Then the differential is expressed as
\begin{equation}\label{eq:minimal-alg}
 dx_i= - \sum_{j,k>i} a_{jk}^i\, x_j x_k \, .
\end{equation}

Mal'cev proved that the existence of a basis $\{X_i\}$ of $\g$ with \emph{rational} structure constants
$a_{jk}^i$ in (\ref{eq:minimal-alg-lie}) is equivalent to the existence of a
co-compact $\Gamma \subset G$. The minimal model of the nilmanifold $N=G/\Gamma$ is
\begin{equation*}
 (\wedge(x_1,\ldots,x_n),d),
 \end{equation*}
where $V=\la x_1,\ldots x_n\ra=\oplus_{i=1}^n \Q x_i$ is the vector space generated by $x_1,\ldots, x_n$
over $\Q$, with $|x_i|=1$ for every $i=1,\ldots,n$ and $d x_i$
is defined according to (\ref{eq:minimal-alg}).

We prove the following:

\begin{teo} \label{thm:main2}
 Let $\bk$ be a field of characteristic zero. The number of minimal models of
 $6$-dimensional nilmanifolds, up to $\bk$-homotopy type, is $26+4s$, where
 $s$ denotes the cardinality of $\Q^*/((\bk^*)^2\cap \Q^*)$. In particular:
\begin{itemize}
\item There are $30$ complex homotopy types of  $6$-dimensional nilmanifolds.
\item There are $34$ real homotopy types of $6$-dimensional nilmanifolds.
\item There are infinitely many rational homotopy types of $6$-dimensional nilmanifolds.
\end{itemize}
\end{teo}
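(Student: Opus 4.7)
The plan is to reduce the statement to the classification of $6$-dimensional minimal algebras over $\bk$ that is established in the preceding sections of the paper. By Sullivan's correspondence a $6$-dimensional nilmanifold $N$ has a rational minimal model $(\wedge V_N,d)$, and its $\bk$-homotopy type is the $\bk$-isomorphism class of $(\wedge V_N\ox\bk,d)$. So the task is: given two such rational minimal models, decide when they become isomorphic after extension of scalars to $\bk$, and count the resulting classes.

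First I would examine the classification list of $6$-dimensional minimal algebras generated in degree $1$ over an arbitrary field $\bk$ of characteristic different from $2$. This list should split into two kinds of entries: \emph{rigid} isomorphism classes, whose structure constants can be chosen in $\Q$ independently of $\bk$; and \emph{parameterized} families, each depending on a scalar $\lambda\in\bk^*$ whose isomorphism class over $\bk$ is determined by its image in $\bk^*/(\bk^*)^2$. The statement asserts that there are $26$ of the former and exactly $4$ of the latter, and both counts should be read off the preceding classification.

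Second, I would restrict to rational representatives. Since a nilmanifold forces its minimal model to be defined over $\Q$, the parameter in each family must lie in $\Q^*$. Two minimal models in the same family with rational parameters $\lambda_1,\lambda_2$ become $\bk$-isomorphic precisely when $\lambda_1\lambda_2^{-1}\in(\bk^*)^2$, which (since both factors are rational) means $\lambda_1\lambda_2^{-1}\in(\bk^*)^2\cap\Q^*$. Hence each of the four families contributes $|\Q^*/((\bk^*)^2\cap\Q^*)|=s$ distinct $\bk$-homotopy types, for a total of $26+4s$. The three particular cases are then immediate: for $\bk=\C$ every element of $\Q^*$ is a square in $\C^*$, so $s=1$ and we obtain $30$; for $\bk=\R$ one has $(\R^*)^2\cap\Q^*=\Q_{>0}^*$, giving $s=[\Q^*:\Q_{>0}^*]=2$ and hence $34$; for $\bk=\Q$ the group $\Q^*/(\Q^*)^2$ is infinite (parameterized by finite subsets of primes together with a sign), producing infinitely many rational homotopy types.

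The main obstacle is verifying the structure assumed in the first step, namely that $[\lambda]\in\bk^*/(\bk^*)^2$ is a genuine isomorphism invariant within each of the four families. This requires analyzing the group of degree-preserving CDGA automorphisms of each $(\wedge V,d)$ to rule out hidden identifications that could absorb a non-square into a change of basis; one must also check that no identifications occur \emph{between} the four families after $\lambda$ is restricted to $\Q^*$. Once these technical points are settled — as they would be in the proof of the general classification over $\bk$ that precedes this theorem — the present statement follows by direct bookkeeping.
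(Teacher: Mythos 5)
Your proposal is correct and follows essentially the same route as the paper: reduce to the classification of $6$-dimensional minimal algebras over $\bk$ (Table 1), observe that a nilmanifold forces the parameter $a$ in each of the four parameterized families to lie in $\Q^*$, and note that two rational parameters give $\bk$-isomorphic algebras precisely when their ratio lies in $(\bk^*)^2\cap\Q^*$, so each family contributes $s=|\Q^*/((\bk^*)^2\cap\Q^*)|$ classes. Your bookkeeping ($26$ rigid classes plus $4$ families, obtained by absorbing the $a=1$ specializations of $L_{6,2}^a$ and $L_{6,12}^a$ into those families) matches the paper's count of $26+4s$, and the deferral of the invariance of $[a]$ to the preceding classification is exactly what the paper does.
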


One of the consequences is the existence of pairs of nilmanifolds $M_1,M_2$ which have
the same real homotopy type, but for which there is no map $f:M_1\to M_2$ inducing
an isomorphism in the real minimal models.

Theorem \ref{thm:main2} is a consequence of the following classification of all minimal algebras generated in degree 1 by a vector space of dimension less than or equal to 6, in which we also give an explicit representative of each isomorphism class. (From now on, by the dimension of a minimal algebra $(\wedge V,d)$ we mean the dimension of $V$.)

\begin{teo} \label{thm:main}
Let $\bk$ any field of any characteristic $\mathrm{char}(\bk)\neq 2$.
There are $26+4r$ isomorphism classes of $6$-dimensional minimal algebras
generated in degree $1$ over $\bk$, where $r$ is the cardinality of $\bk^*/(\bk^*)^2$.
\end{teo}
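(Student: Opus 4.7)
The plan is to classify $6$-dimensional minimal algebras generated in degree $1$ by a case-by-case analysis organized by a canonical ascending filtration associated to the minimality condition, and then to account for the factor $r = |\bk^*/(\bk^*)^2|$ separately in the handful of cases where a quadratic scaling obstruction survives.

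First I would set up the right invariant. Given a minimal $(\wedge V, d)$, define $V(1) := \ker(d|_V)$ and $V(k) := \{v \in V : dv \in \wedge^2 V(k-1)\}$ for $k \geq 2$. The minimality assumption forces $V(1) \neq 0$ and $V(c) = V$ for some smallest $c \geq 1$ (the nilpotency step; in characteristic zero this is dual to the lower central series of the associated Lie algebra via \eqref{eq:minimal-alg-lie}). Since each $(\wedge V(k), d)$ is a sub-CDGA preserved by every automorphism, the tuple $(n_1, \ldots, n_c)$ with $n_k := \dim V(k)/V(k-1)$ is an isomorphism invariant and will be the main bookkeeping device.

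Next I would induct on dimension, first disposing of dimensions $\leq 5$ by a short direct analysis in order to anchor the induction. For $\dim V = 6$, I would enumerate all tuples $(n_1, \ldots, n_c)$ of positive integers summing to $6$. For each such tuple, pick a compatible ordered basis $x_1, \ldots, x_6$ (elements of $V(k) \setminus V(k-1)$ grouped consecutively) and write each $dx_i$ as a general closed element of $\wedge^2 V(k-1)$, then quotient by the block upper-triangular subgroup of $\mathrm{GL}(V)$ that preserves the filtration, acting by the induced representation on $\wedge^2 V$. This reduces the problem to finitely many orbit problems for tuples of alternating forms on $V(k-1)$, each of which can be put into a small finite list of normal forms by elementary linear algebra over $\bk$ (using $\mathrm{char}(\bk) \neq 2$ to diagonalize symmetric pieces when they arise).

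The heart of the proof, and the source of the count $26 + 4r$, is the handling of the four exceptional tuples for which the orbit space of normal forms is not a single point but is parametrized by a class $\lambda \in \bk^*/(\bk^*)^2$. In each of these cases, after all other normalizations are made, the only remaining change of basis that preserves the shape of $d$ acts on one distinguished generator by $x_i \mapsto c\, x_i$, inducing the relation $\lambda \sim c^2 \lambda$. The hard part will be (a) to verify that these four families remain separated from the other $26$ rigid classes and from each other under \emph{every} filtration-preserving isomorphism (not only the obvious block-triangular ones), and (b) to prove that distinct classes of $\lambda$ within one family yield non-isomorphic minimal algebras, which I would do by exhibiting a $\bk$-linear invariant, typically the discriminant modulo squares of a canonical symmetric bilinear form induced on a natural subquotient of $V$ or of $\wedge^2 V$. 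Combined with the uniqueness of minimal models over $\bk$ (proved in the appendix referenced in the introduction), this yields exactly $26 + 4r$ isomorphism classes.
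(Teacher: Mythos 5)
Your proposal follows essentially the same route as the paper: the canonical filtration $W_1=\ker(d)\cap V$, $W_k=d^{-1}(\wedge^2W_{k-1})$ with invariant tuple $(f_1,\ldots,f_c)$ is exactly the paper's bookkeeping device, the case-by-case reduction to normal forms of alternating forms over $\bk$ (using $\mathrm{char}(\bk)\neq 2$) is the paper's Sections 2--5, and the four parametrized families $L_{6,2}^a$, $L_{6,8}^a$, $L_{6,12}^a$, $L_{6,17}^a$ are separated in the paper precisely by the kind of square-class invariants (e.g.\ $\det A\in\bk^*/(\bk^*)^2$, the class of $\f_6^2x_1$, or the Galois conjugacy class of the quadratic extension) that you describe, giving the count $28+2(r-1)+2r=26+4r$. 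The only inessential difference is that uniqueness of the minimal model from the appendix is not actually needed for this theorem, which concerns isomorphism classes of the minimal algebras themselves.
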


As the Chevalley-Eilenberg complex, defined as above over a nilpotent Lie algebra, gives a one-to-one correspondence between these objects and minimal algebras generated in degree 1, we obtain the following
\begin{cor}
There are $26+4r$ isomorphism classes of $6$-dimensional nilpotent Lie algebras over $\bk$, where $r$
is the cardinality of $\bk^*/(\bk^*)^2$. In particular:
 \begin{itemize}
  \item There are $30$ isomorphism classes of $6$-dimensional nilpotent real Lie algebras.
  \item There are $34$ isomorphism classes of $6$-dimensional nilpotent complex Lie algebras.
  \item For finite fields $\bk=\F_{p^n}$, with $p\neq 2$, the cardinality of $\bk^*/(\bk^*)^2$ is $r=2$.
  So there are $34$ isomorphism classes of $6$-dimensional nilpotent Lie algebras defined
  over $\F_{p^n}$, $p\neq 2$.
  \end{itemize}
\end{cor}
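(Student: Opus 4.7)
The plan is to leverage the Chevalley--Eilenberg correspondence, already invoked in the excerpt to describe the minimal model of a nilmanifold, to pass directly from Theorem \ref{thm:main} to the Corollary. The first task is to establish that the assignment $\g \mapsto (\wedge \g^*, d)$, with $d$ defined by $dx(X,Y) = -x([X,Y])$, descends to a bijection on isomorphism classes between finite-dimensional nilpotent Lie algebras over $\bk$ and finite-dimensional minimal $\bk$-algebras generated in degree $1$.

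To justify the bijection, I would argue in both directions. Given $\g$, a basis $\{X_i\}$ adapted to the descending central series as in (\ref{eq:minimal-alg-lie}) dualises to a basis $\{x_i\}$ with $dx_i$ expressed by (\ref{eq:minimal-alg}); up to reversing the order of the index set, this is exactly the minimality condition (2) (condition (1) is automatic, since $V = \g^*$ sits in degree $1$). Conversely, given a minimal $(\wedge V, d)$ generated in degree $1$, the vector space $\g := V^*$ inherits a bracket via $x([X,Y]) := -dx(X,Y)$; the relation $d^2 = 0$ is equivalent to Jacobi, and the triangularity of $d$ in a well-ordered basis forces the descending central series of $\g$ to terminate, so $\g$ is nilpotent. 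Since an isomorphism of Lie algebras transposes to an isomorphism of the associated CDGAs and conversely, the correspondence is bijective on isomorphism classes. Theorem \ref{thm:main} then yields $26 + 4r$ isomorphism classes of nilpotent Lie algebras of dimension $6$ over $\bk$.

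For the three special cases it only remains to evaluate $r = |\bk^*/(\bk^*)^2|$. For $\bk = \R$ the square classes are represented by $\pm 1$, giving $r = 2$ and a total of $34$. For $\bk = \C$ every nonzero complex number is a square, so $r = 1$ and the total is $30$. For $\bk = \F_{p^n}$ with $p$ odd, the multiplicative group is cyclic of even order $p^n - 1$, so the squaring map has kernel $\{\pm 1\}$ and image of index $2$; hence $r = 2$ and the total is $34$. There is no genuine obstacle to overcome: Theorem \ref{thm:main} does all the heavy lifting, and the Chevalley--Eilenberg translation is entirely formal, the only mildly delicate point being the order-reversal needed to align (\ref{eq:minimal-alg-lie})--(\ref{eq:minimal-alg}) with the well-ordering in minimality condition (2).
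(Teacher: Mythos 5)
Your proposal is correct and follows exactly the paper's route: the authors derive the corollary in one line from Theorem \ref{thm:main} via the Chevalley--Eilenberg correspondence, and your fleshing-out of that correspondence (duality of brackets and differentials, $d^2=0$ versus Jacobi, triangularity versus nilpotency) together with the evaluation of $r$ is the intended argument. Note that your computation gives $34$ classes over $\R$ ($r=2$) and $30$ over $\C$ ($r=1$), which agrees with Theorem \ref{thm:main2}; the first two bullets in the corollary as printed have the words ``real'' and ``complex'' transposed.
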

This result is already known in the literature (see for instance \cite{Cerezo} or \cite{de-Graaf}),
but we obtain it from a new perspective: our starting point is the classification of
minimal models.

\medskip

Note that the classification of real 
homotopy types of $6$-dimensional
nilmanifolds already appears in the literature (see for instance \cite{Goze} and \cite{Magnin}).

\medskip

We end up the paper by determining which $6$-dimensional nilmanifolds admit a
symplectic structure. In particular, there are $27$ real homotopy types of $6$-dimensional
nilmanifolds admitting symplectic forms.
This appears already in \cite{Salamon}, but we have decided to
include it here for completeness, and to write down explicit symplectic forms in the
cases where the nilmanifold does admit them.

\medskip

\noindent \textbf{Acknowledgements.} We thank the referee for many suggestions which have
improved the presentation of the paper. We are grateful to Aniceto Murillo and Marisa Fern\'andez
for discussions on this work.

\section{Preliminaries}\label{section2}

Let $\bk$ be a field of characteristic different from $2$. Let $V=\la x_1,\ldots x_n\ra=
\oplus_{i=1}^n \bk x_i$ be a finite dimensional vector space
over $\bk$ with $\dim V\geq 2$.
We want to analyse minimal algebras of the type
 \begin{equation*}
 (\wedge(x_1,\ldots,x_n),d)
 \end{equation*}
where $|x_i|=1$, for every $i=1,\ldots,n$, and $d x_i$ is defined according to (\ref{eq:minimal-alg}), with
$a_{ij}^k\in \bk$.
Write $(\wedge V,d)$ with $V=V^1$ (i.e. $\wedge V$ is generated as an algebra by elements of degree $1$). Set
 $$
 \begin{array}{ccl}
 W_1 & = & \ker(d)\cap V\\
 W_k & = & d^{-1}(\wedge^2W_{k-1}), \ \mathrm{for} \ k\geq 2 \, .
 \end{array}
 $$
This is a filtration of $V$ intrinsically defined. We see that  $W_k\subset W_{k+1}$, for $k\geq 1$,
as follows. First notice that $W_1\subset W_2$ since $W_1=d^{-1}(0)$.
By induction, suppose that $W_{k-1}\subset W_k$; then we have
 $$
 d(W_k)=d(d^{-1}(\wedge^2W_{k-1}))\subset\wedge^2W_{k-1}\subset\wedge^2W_k\, .
 $$
This proves that $W_k\subset W_{k+1}$, as required.

Now define
 $$
 \begin{array}{ccl}
 F_1 & = & W_1\\
 F_k & = & W_k/W_{k-1} \ \mathrm{for} \ k\geq 2 \, .
 \end{array}
 $$
Then, in a non-canonical way, one has $V=\oplus F_i$.
The numbers $f_k=\dim(F_k)$ are invariants of $V$. Notice that $f_k=0$ eventually.
Under the splitting $W_k=W_{k-1}\oplus F_k$, the differential decomposes as\footnote{We use the notation
$W_{k-1}\ox F_k$ instead of $W_{k-1}\cdot F_k$, tacitly using the natural isomorphism
$W_{k-1}\cdot F_k \cong W_{k-1}\ox F_k$.
We prefer this notation, as the other one could lead to some apparent incoherences along the paper.}
 $$
 d: W_{k+1} \too \wedge^2 W_k= \wedge^2 W_{k-1} \oplus (W_{k-1}\ox F_k)\oplus \wedge^2 F_k
 $$
If we project to the second and third summands, we have
  $$
 d: W_{k+1} \too \frac{\wedge^2 W_k}{\wedge^2 W_{k-1}} = (W_{k-1}\ox F_k)\oplus \wedge^2 F_k
 $$
which vanishes on $W_k$, and hence induces a map
  \begin{equation}\label{eqn:fv}
 \bar{d}: F_{k+1} \too (W_{k-1}\ox F_k)\oplus \wedge^2 F_k =
 ((F_1\oplus\ldots\oplus F_{k-1}) \ox F_k) \oplus\wedge^2 F_k \, .
 \end{equation}
This map is injective, because $W_k=d^{-1}(\wedge^2 W_{k-1})$.
Notice that the map (\ref{eqn:fv}) is not canonical, since it depends on the choice of
the splitting.

The differential $d$ also determines a well-defined map (independent of choice of splitting)
$$
 \hat{d}: F_{k+1} \to H^2(\wedge(F_1\oplus\ldots\oplus F_k),d)\, ,
$$
which is also injective.

By considering $\bar{d}:F_2 \to \wedge^2 F_1$, we see that $f_1\geq 2$.
Moreover, if $f_1=2$ then $f_2=1$, and $\bar{d}:F_2 \to \wedge^2 F_1$ is an isomorphism.

We shall make extensive use of the following (easy) result.

 \begin{lemma} \label{lem:bilinear}
 Let $W$ be a $\bk$-vector space of dimension $k$, where $\bk$ is a field of characteristic different from $2$.
 Given any element $\f\in \wedge^2 W$, there is a (not unique) basis $x_1,\ldots, x_k$
 of $W$ such that $\f=x_1\wedge x_2 +\ldots + x_{2r-1}\wedge x_{2r}$, for some $r\geq 0$,
 $2r\leq k$.

The $2r$-dimensional space $\la x_1,\ldots, x_{2r}\ra \subset W$ is well-defined
(independent of the basis).
 \end{lemma}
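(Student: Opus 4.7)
The plan has two parts: first prove existence of the normal form by induction on $k=\dim W$, then identify the canonical $2r$-dimensional subspace as the image of an intrinsically defined linear map.

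For existence, I induct on $k$, with $k\leq 1$ trivial since $\wedge^2 W=0$. In the inductive step, if $\f=0$ we are done with $r=0$; otherwise, fix any basis $e_1,\ldots,e_k$ of $W$ and write $\f=\sum_{i<j}a_{ij}\,e_i\wedge e_j$, relabeling so that $a_{12}\neq 0$. The aim is to split off a single summand $y_1\wedge y_2$ so that the remainder lies in $\wedge^2 U$ for a subspace $U\subset W$ of codimension $2$ complementary to $\la y_1,y_2\ra$, and then invoke the induction hypothesis on $U$. Concretely, set $y_2:=a_{12}e_2+\sum_{j>2}a_{1j}e_j$; subtracting $e_1\wedge y_2$ from $\f$ cancels every term of $\f$ involving $e_1$, leaving an element of $\wedge^2\la e_2,\ldots,e_k\ra$. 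Rewriting the $e_2$-terms of this residue via $e_2=(y_2-\sum_{j>2}a_{1j}e_j)/a_{12}$ produces cross terms of the form $y_2\wedge z$ with $z\in\la e_3,\ldots,e_k\ra$, which can be absorbed by redefining $y_1:=e_1-z$. After this adjustment $\f=y_1\wedge y_2+\psi_0$ with $\psi_0\in\wedge^2\la e_3,\ldots,e_k\ra$, and the transition $(e_1,e_2,e_3,\ldots,e_k)\mapsto(y_1,y_2,e_3,\ldots,e_k)$ is triangular and invertible. Applying the induction hypothesis to $\psi_0$ produces the remaining $y_3,\ldots,y_{2r}\in\la e_3,\ldots,e_k\ra$, and these together with $y_1,y_2$ extend to a basis of $W$.

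For invariance of the subspace $U_\f:=\la x_1,\ldots,x_{2r}\ra$, I introduce the intrinsic contraction map $\f^{\flat}\colon W^*\to W$, $\xi\mapsto\iota_\xi\f$, and claim $U_\f=\im(\f^{\flat})$, which is manifestly independent of any choice of basis. Given a decomposition $\f=\sum_{i=1}^r x_{2i-1}\wedge x_{2i}$, the identity $\iota_\xi(x_{2i-1}\wedge x_{2i})=\xi(x_{2i-1})x_{2i}-\xi(x_{2i})x_{2i-1}$ shows $\im(\f^{\flat})\subset U_\f$; conversely, evaluating $\f^{\flat}$ on the dual basis to any extension of $x_1,\ldots,x_{2r}$ to a basis of $W$ recovers each $x_i$ up to sign, yielding the reverse inclusion. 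In particular the integer $2r$ is an invariant of $\f$. I expect the main obstacle to be the bookkeeping in the inductive step, specifically the correction $y_1\mapsto y_1-z$ needed to disentangle $y_2$ from the residual after eliminating $e_1$; everything else is routine linear algebra, and the hypothesis $\mathrm{char}(\bk)\neq 2$ is not strictly used here beyond the standing assumption $x\wedge x=0$ in $\wedge^2 W$.
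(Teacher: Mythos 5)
Your proof is correct. It reaches the same normal form as the paper but by a genuinely self-contained route: the paper dualizes, viewing $\f$ as an antisymmetric bilinear form on $W^*$ of rank $2r$, and simply invokes the standard skew-symmetric normal form to get a basis $e_1,\ldots,e_k$ of $W^*$ whose dual basis works; you instead run the underlying induction directly on the bivector, splitting off $(e_1-z)\wedge y_2$ by an explicit elimination, and your bookkeeping (the correction $y_1=e_1-z$ absorbing the residual $y_2\wedge z$ terms) checks out. The more substantive difference is in the second assertion: the paper's proof does not actually address why $\la x_1,\ldots,x_{2r}\ra$ is independent of the chosen basis, whereas you identify it intrinsically as $\im\bigl(\f^{\flat}\colon W^*\to W,\ \xi\mapsto\iota_\xi\f\bigr)$ and verify both inclusions; this fills a genuine (if minor) gap in the paper's argument and also makes the invariance of $2r$ transparent. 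Your closing remark is also accurate: since elements of $\wedge^2 W$ correspond to alternating (not merely skew-symmetric) forms, the hypothesis $\mathrm{char}(\bk)\neq 2$ is not needed for this particular lemma.
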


 \begin{proof}
 Interpret $\f$ as
 a antisymmetric bilinear map $W^* \x W^* \to \Q$. Let $2r$ be its rank, and consider
 a basis $e_1,\ldots, e_k$ of $W^*$ such that $\f(e_{2i-1}, e_{2i})=1$, $1\leq i\leq r$,
 and the other pairings are zero. Then the dual basis $x_1,\ldots, x_k$ does the job.
 \end{proof}

\section{Classification in low dimensions}\label{sec:low-degrees}
As we said in the introduction, a
minimal algebra $(\wedge V,d)$ is of dimension $k$ if $\dim V=k$.
We start with the classification of minimal algebras over $\bk$
of dimensions $2$, $3$ and $4$.


\subsection*{Dimension $2$}
It should be $f_1=2$, so there is just one possibility:
 $$
 (\wedge(x_1,x_2),dx_1=dx_2=0)\, .
 $$
The corresponding Lie algebra is abelian.

For $\bk=\Q$, where we are classifying $2$-dimensional nilmanifolds, the corresponding
nilmanifold is the $2$-torus.

\subsection*{Dimension $3$}
Now there are two possibilities:
 \begin{itemize}
  \item $f_1=3$. Then  the minimal algebra is $(\wedge(x_1,x_2,x_3),dx_1=dx_2=dx_3=0)$.
  The corresponding Lie algebra is abelian.
  In the case $\bk=\Q$, the associated nilmanifold is the $3$-torus.
  \item $f_1=2$ and $f_2=1$. Then $\bar{d}:F_2 \to \wedge^2 F_1$ is an isomorphism. We choose a
  generator $x_3\in F_2$ such that $dx_3=x_1x_2\in \wedge^2 F_1$. The minimal algebra is
  $(\wedge(x_1,x_2,x_3),dx_1=dx_2=0, dx_3=x_1x_2)$. The corresponding Lie algebra is the
  Heisenberg Lie algebra. And for $\bk=\Q$, the associated nilmanifold is known as the
  Heisenberg nilmanifold (see \cite{Oprea-Tralle}).
  \end{itemize}

We summarize the classification in the following table:
\begin{center}
\begin{tabular}{|c|c|c|c|c|}
\hline
$(f_i)$ & $dx_1$ & $dx_2$ & $dx_3$ & $\g$\\
\hline
$(3)$ & 0 & 0 & 0 & $A_3$\\
\hline
$(2,1)$ & 0 & 0 & $x_1x_2$ & $L_3$\\
\hline
\end{tabular}
\end{center}

In the last column we have the corresponding Lie algebra: the abelian one, $A_3$, and the Lie algebra of the Heisenberg group, which we denote by $L_3$.

\subsection*{Dimension $4$}
The minimal algebra is of the form $(\wedge(x_1,x_2,x_3,x_4),d)$.
We have to consider the following cases:
 \begin{itemize}
  \item $f_1=4$. Then the $4$ elements $x_i$ have zero differential. The corresponding Lie
  algebra is abelian.
  \item $f_1=3$, $f_2=1$. As the map $\bar{d}:F_2 \to \wedge^2 F_1$ is injective, there is
  a non-zero element in the image $\f_4\in \wedge^2 F_1$. Using Lemma \ref{lem:bilinear},
  we can choose a basis $x_1,x_2,x_3$ for $F_1$ such that $\f_4=x_1 x_2$.
  Then choose $x_4\in F_2$ such that $dx_4=\f_4=x_1 x_2$. Obviously, $dx_1=dx_2=dx_3=0$.
  \item $f_1=2$, $f_2=1$, $f_3=1$. In this case, we have a basis for $F_1\oplus F_2$
  such that $dx_1=0,dx_2=0$ and $dx_3=x_1x_2$.
  The map
  $$
  \bar{d}:F_3\rightarrow F_1\ox F_2
  $$
  is injective, hence the image
  determines a line $\ell\subset F_1$ such that $\bar{d}(F_3)=\ell\ox F_2$.
  As $d(F_1\oplus F_2)=\wedge^2 F_1$, we can choose $F_3\subset W_3$ such that $d(F_3)=\ell \ox F_2$.
  We choose the basis as follows: let $x_1\in F_1$ be a vector
  spanning $\ell$; $x_2$ another vector so that $x_1,x_2$ is a basis of $F_1$;
   let $x_3\in F_2$ so that $dx_3=x_1x_2$; finally choose $x_4$ such that $dx_4=x_1x_3$.
  \end{itemize}

The results are collected in the following table:
\begin{center}
\begin{tabular}{|c|c|c|c|c|c|}
\hline
$(f_i)$ & $dx_1$ & $dx_2$ & $dx_3$ & $dx_4$ & $\g$\\
\hline
$(4)$ & $0$ & $0$ & $0$ & $0$ & $A_4$\\
\hline
$(3,1)$ & $0$ & $0$ & $0$ & $x_1x_2$ & $L_3\oplus A_1$\\
\hline
$(2,1,1)$ & $0$ & $0$ & $x_1x_2$ & $x_1x_3$ & $L_4$\\
\hline
\end{tabular}
\end{center}

The $n$-dimensional abelian Lie algebra is $A_n$; $L_4$ denotes the (unique)
irreducible $4$-dimensional nilpotent Lie algebra.

\section{Classification in dimension $5$}\label{sec:5}

The minimal algebra is of the form $(\wedge(x_1,x_2,x_3,x_4,x_5),d)$.
The possibilities for the numbers $f_k$ are the following:
$(f_1)=(5)$, $(f_1,f_2)=(4,1)$, $(f_1,f_2)=(3,2)$, $(f_1,f_2,f_3)=(3,1,1)$,
$(f_1,f_2,f_3)=(2,1,2)$, $(f_1,f_2,f_3,f_4)=(2,1,1,1)$ (noting that
$f_1\geq 2$ and that $f_1=2 \implies f_2=1$).
We study all these possibilities in detail:

\subsection*{Case $(5)$}

All the elements have zero differential. 

\subsection*{Case $(4,1)$}
Then
$F_1$ is a $4$-dimensional vector space. Now the image of $\bar{d}:F_2\to \wedge^2 F_1$ defines a line generated
by some non-zero element $\f_5\in \wedge^2 F_1$.
By Lemma \ref{lem:bilinear}, we have two cases, according to the rank of $\f_5$ (by the \textit{rank} of $\f_5$, we mean henceforth its rank as a bivector):
 \begin{enumerate}
  \item There is a basis $F_1=\langle x_1,x_2,x_3,x_4\rangle$ such that $dx_5=\f_5=x_1x_2$.
  \item There is a basis $F_1=\langle x_1,x_2,x_3,x_4\rangle$ such that $dx_5=\f_5=x_1x_2+ x_3x_4$.
 \end{enumerate}

\subsection*{Case $(3,2)$}
Now $F_1$ is a $3$-dimensional vector space,
and $\bar{d}: F_2 \hookrightarrow \wedge^2 F_1$. By Lemma \ref{lem:bilinear}, every
non-zero element $\f\in \wedge^2 F_1$ is of the form $\f=x_1x_2$ for a suitable
basis $x_1,x_2,x_3$ of $F_1$, and determines a well-defined plane $\pi=\la x_1,x_2\ra \subset F_1$.

Now $F_2 \subset \wedge^2 F_1$ is a two-dimensional vector space. Consider two linearly independent elements
of $F_2$, which give two different planes in $F_1$, and let $x_1$ be a vector spanning their intersection.
Now take a vector $x_2$ completing a basis for the first plane and a vector $x_3$ completing a basis for the second plane. Then we get the differentials $dx_4=x_1x_2$, $dx_5=x_1x_3$.

\subsection*{Case $(3,1,1)$}
$F_1$ is $3$-dimensional, and the image of $\bar{d}: F_2\hookrightarrow \wedge^2 F_1$ determines
a plane $\pi\subset F_1$. Now
 $$
 \bar{d}:F_3 \hookrightarrow F_1\ox F_2
 $$
determines a line $\ell\subset F_1$ (such that $\bar{d}(F_3)=\ell\ox F_2)$.
We easily compute
  \begin{equation}\label{enq:h2-1}
  H^2(\wedge (F_1\oplus F_2),d)= \frac{\ker(d:\wedge^2 (F_1\oplus F_2) \to \wedge^3 (F_1\oplus
   F_2))}{\im (d: F_1\oplus F_2 \to \wedge^2 (F_1\oplus F_2))} = (\wedge^2 F_1/d(F_2)) \oplus (\pi \ox F_2)\, .
  \end{equation}
(The map $d:F_1\ox F_2\inc F_1 \ox \wedge^2 F_1 \to \wedge^3 F_1$ sends $v\ox F_2\mapsto 0$ if and only
if $v\in\pi$).\linebreak Hence $\ell\subset\pi$. We can arrange a basis $x_1,x_2,x_3, x_4,x_5$ with $\ell=\la x_1 \ra$, $\pi=\la x_1,x_2\ra$, $F_1=\la x_1,x_2,x_3\ra$,
so that $\f_4=dx_4=x_1x_2$, $\f_5=dx_5=x_1 x_4+ v$, where $v\in \wedge^2 F_1$.
Recall that $F_2$, $F_3$ are not well-defined (only $W_1\subset W_2\subset W_3$ is a
well-defined filtration). In particular, this means that $\f_4$ is well-defined, but
$\f_5$ is only well defined up to $\f_5 \mapsto \f_5+ \mu \f_4$. But then
$\f_5^2 \in \wedge^4 W_2$ is well-defined, so we can distinguish cases
according to the rank (as a bilinear form) of $\f_5 \in \wedge^2(F_1\oplus F_2)$:
 \begin{enumerate}
  \item $\f_5$ is of rank $2$.
  This determines a plane $\pi'\subset W_2=F_1\oplus F_2$. The intersection of $\pi'$ with
 $F_1$ is the line $\ell$. Take an element $x_4\in \pi'$ not in the line, and declare $F_2\subset W_2$
 to be the span of $x_4$. Therefore $dx_5=x_1x_4$.
 \item $\f_5$ is of rank $4$. The vector $v$ is well-defined in $\wedge^2 F_1/d(F_2)$.
  Thus $v=a x_1x_3 + bx_2x_3$ with $b\neq 0$. We do the change of
  variables $x_4'=x_4+ax_3$, $x_3'=bx_3$. Then $x_1,x_2,x_3', x_4',x_5$ is a basis with $dx_4'=x_1x_2$,
  $dx_5'=x_1x_4'+x_2x_3'$.
 \end{enumerate}

\subsection*{Case $(2,1,2)$}
Now $F_1$ is $2$-dimensional; then $\bar{d}:F_2 \to \wedge^2 F_1$ is an isomorphism and
$\bar{d}:F_3 \to F_1\ox F_2$ is an isomorphism. Therefore
there is a basis $x_1,x_2,x_3, x_4,x_5$ such that  $dx_3=x_1x_2$, $dx_4=x_1x_3$, and $dx_5=x_2x_3$.

\subsection*{Case $(2,1,1,1)$}
Now $\bar{d}:F_2 \to \wedge^2 F_1$ is an isomorphism and
the image of $\bar{d}:F_3 \to F_1\ox F_2$ produces
a line $\ell\subset F_1$. Write $\ell=\la x_1\ra$, $F_1=\la x_1,x_2\ra$, $F_2=\la x_3\ra$ and
$F_3=\la x_4\ra$ so that $dx_3=x_1x_2$, $dx_4=x_1x_3$.

For studying $F_4$, compute
  \begin{equation}\label{eqn:h2-2}
  H^2(\wedge (F_1\oplus F_2\oplus F_3),d)= ((F_1/\ell) \ox F_2)\oplus (\ell\ox F_3 ) .
  \end{equation}
(Clearly $d(F_1\ox F_2)=0$, $d:F_1\ox F_3 \to \wedge^2 F_1 \ox F_2$ has kernel equal to $\ell\ox F_3$,
and $d:F_2\ox F_3 \to \wedge^2 F_1 \ox F_3$ is injective, so $\ker d= \wedge^2 F_1 \oplus
(F_1\ox F_2)\oplus (\ell\ox F_3 )$; on the other hand $\im \, d=\wedge^2 F_1 \oplus
(\ell \ox F_2)$.)
Recall that the element $\f_5$ generating $d(F_4)$ should have non-zero projection to $\ell\ox F_3$.
Also, $\f_5$ can be understood as a bivector in $W_3=F_1\oplus F_2\oplus F_3$.
This is well-defined up to the addition of elements in $d(W_3)=\wedge^2 F_1 \oplus
(\ell\ox F_2)$; so $\f_5^2 \in \wedge^2 W_3$ is well-defined, and hence we can talk
about the rank of $\f_5$. We have two cases:
\begin{enumerate}
 \item $\f_5$ is of rank $2$. This determines a plane $\pi'\subset W_3$, which intersects $F_1\oplus F_2$
 in a line. Let $v$ span this line and $x_4$ be another generator of $\pi'$. Write $\f_5= v x_4$. It
 must be $\la v \ra =\ell$, so $v=x_1$. Then $dx_3=x_1x_2$, $dx_4=x_1x_3$ and $dx_5=x_1x_4$.
 \item $\f_5$ is of rank $4$. Then the projection of $\f_5$ to the first summand in (\ref{eqn:h2-2})
 must be non-zero. So there is a choice of basis so
 that $dx_3=x_1x_2$, $dx_4=x_1x_3$ and $dx_5=x_1x_4+x_2x_3$.
\end{enumerate}


\subsection*{Summary of results}
We gather all the results in the following table; the first $3$
columns display the nonzero differentials.
The fourth one gives the corresponding Lie algebras,
and the last one refers to the list contained in \cite{Cerezo}:
\begin{center}
\begin{tabular}{|c|c|c|c|c|c|c|c|}
\hline
$(f_i)$ & $dx_3$ & $dx_4$ & $dx_5$ & $\g$ & \cite{Cerezo}\\
\hline
(5,0) & 0 & 0 & 0 & $A_5$ & $-$\\
\hline
(4,1)  & 0 & 0 & $x_1x_2$ & $L_3\oplus A_2$ & $-$ \\
\hline
    & 0 & 0 & $x_1x_2+x_3x_4$ & $L_{5,1}$ & $\mathcal{N}_{5,6}$\\
\hline
(3,2) & 0 & $x_1x_2$ & $x_1x_3$ & $L_{5,2}$ & $\mathcal{N}_{5,5}$\\
\hline
(3,1,1)  & 0 & $x_1x_2$ & $x_1x_4$ & $L_4\oplus A_1$ & $-$\\
\hline
    & 0 & $x_1x_2$ & $x_1x_4+x_2x_3$ & $L_{5,3}$ & $\mathcal{N}_{5,4}$\\
\hline
(2,1,2)   & $x_1x_2$ & $x_1x_3$ & $x_2x_3$ & $L_{5,5}$ & $\mathcal{N}_{5,3}$\\
\hline
(2,1,1,1)  & $x_1x_2$ & $x_1x_3$ & $x_1x_4$ & $L_{5,4}$ & $\mathcal{N}_{5,2}$\\
\hline
    & $x_1x_2$ & $x_1x_3$ & $x_1x_4+x_2x_3$ & $L_{5,6}$ & $\mathcal{N}_{5,1}$\\
\hline
\end{tabular}
\end{center}

As before, $L_{5,k}$ denote the non-split $5$-dimensional nilpotent Lie algebras.

Recall that this classification works over any field $\bk$. 
In the case $\bk=\Q$, this means in particular that there are $9$ nilpotent Lie algebras of dimension $5$
over $\Q$ and, as a consequence, $9$ rational homotopy types of $5$-dimensional nilmanifolds.

\section{Classification in dimension $6$}\label{sec:6}

Now we move to study minimal algebras of the form $(\wedge (x_1,x_2,x_3,x_4,x_5,x_6),d)$,
where $|x_i|=1$. The numbers $\{f_k\}$ can be the following:
$(f_1)=(6)$, $(f_1,f_2)=(5,1)$, $(f_1,f_2)=(4,2)$, $(f_1,f_2,f_3)=(4,1,1)$,
$(f_1,f_2)=(3,3)$,
$(f_1,f_2,f_3)=(3,2,1)$,
$(f_1,f_2,f_3)=(3,1,2)$,
$(f_1,f_2,f_3,f_4)=(3,1,1,1)$,
$(f_1,f_2,f_3,f_4)=(2,1,2,1)$,
$(f_1,f_2,f_3,f_4)=(2,1,1,2)$ and
$(f_1,f_2,f_3,f_4)=(2,1,1,1,1)$.

The case $(2,1,3)$ does not appear due to the injectivity of the differential $\bar{d}: F_3
\to W_1\ox F_2$.
Also the case $(2,1,1,2)$ does not show up, as we will see at the end of this section.
Now we consider all the cases in detail.

\subsection*{Case $(6)$}
In this case we have
$F_1=V$, $d(F_1)=0$.
This corresponds to the abelian Lie algebra.

\subsection*{Case $(5,1)$}
Here $F_1$ is a $5$-dimensional vector space and $F_2$ is $1$-dimensional, $F_2=\langle x_6\rangle$; $\bar{d}(F_2)\subset\wedge^2F_1$. Let $\f_6=dx_6\in\wedge^2F_1$ be a generator of $d(F_2)$.
By Lemma \ref{lem:bilinear}, we have the following cases:
\begin{enumerate}
 \item $\rk(\f_6)=2$. Then there exists a basis of $F_1$ such that $dx_6=x_1x_2$.
 \item $\rk(\f_6)=4$. Then there exists a basis of $F_1$ such that $dx_6=x_1x_2+x_3x_4$.
\end{enumerate}

\subsection*{Case $(4,2)$}
Here $F_1$ is a $4$-dimensional vector space and $\bar{d}:F_2\inc \wedge^2 F_1$.
This defines a projective line $\ell$ in $\P(\wedge^2 F_1)=\P^5$.

The skew-symmetric matrices of dimension $4$ with rank $\leq 2$ are given as the zero
locus of the single quadratic homogeneous equation
 $$
a_1a_6-a_2a_5+a_3a_4=0 \, ,
 $$
where
$$
A=\begin{pmatrix}
 0 & a_1 & a_2 & a_3\\
-a_1 & 0 & a_4 & a_5\\
-a_2 & -a_4 & 0 & a_6\\
-a_3 & -a_5 & -a_6 & 0\end{pmatrix}
$$
is a skew-symmetric matrix. This defines a smooth quadric $\mathcal{Q}$ in $\P^5$.

Now we have to look at the intersection of $\ell$ with $\mathcal{Q}$. Here it is where the field
of definition matters.

\begin{enumerate}
 \item $\ell\cap\mathcal{Q}=\{p_1,p_2\}$, two different points. Choose $\f_5,\f_6\in \wedge^2 F_1$
 so that they correspond to the points $p_1,p_2\in \P(\wedge^2F_1)$. Accordingly, choose
 $x_5,x_6$ generators of $F_2$ so that $\f_5=dx_5$, $\f_6=dx_6$. Note that both are bivectors
 of $F_1$ of rank $2$, but the elements $a \f_5 + b\f_6$, $ab\neq 0$ are of rank $4$.
 By Lemma \ref{lem:bilinear}, a rank $2$ element determines a plane in $F_1$. The two planes
 corresponding to $\f_5$, $\f_6$ intersect transversally (otherwise, we are in case (2) below).
 Thus we can choose a basis $x_1,x_2,x_3,x_4$ for $F_1$
 so that $dx_5=x_1x_2$ and $dx_6=x_3x_4$. Note that the elements $ax_1x_2+bx_3x_4$ are of rank $4$
 when $ab\neq 0$.

 \item $\ell\subset\mathcal{Q}$.
 We choose a basis $x_5,x_6$ so that both $\f_5=dx_5$, $\f_6=dx_6$ have rank $2$. All linear combinations
 $a dx_5+b dx_6$ are also of rank $2$. The planes determined by $\f_5, \f_6$ do not intersect
 transversally (otherwise we are in case (1) above), so they intersect in a line.
 Then we can choose a basis $x_1,x_2,x_3,x_4$ for $F_1$
 so that $dx_5=x_1x_2$ and $dx_6=x_1x_3$, the line being $\la x_1\ra$.
 Note that all elements
   $a \f_5 + b\f_6 = x_1(ax_2+bx_3)$ are of rank $2$.

 \item $\ell\cap\mathcal{Q}=\{p\}$. This means that $\ell$ is tangent to $\mathcal{Q}$. Let $\f_5
 \in\wedge^2 F_1$ corresponding to $p$. This is of rank $2$, so it determines
 a plane $\pi\subset F_1$. The plane $\pi$ is
 described by some equations $e_3=e_4=0$, where $e_3,e_4\in F_1^*$.
  Now consider $\f_6\in \wedge^2 F_1$ giving another point
  $q\in \ell$. So $\f_6$ is of rank $4$ 
  (see Lemma \ref{lem:bilinear}).
  If $\f_6(e_3,e_4)=1$, then choose $e_1,e_2$ so that $\f_6=x_1x_2+x_3x_4$, but then
  $\f_5=\lambda x_1x_2$, with $\lambda\neq 0$, and $\f_6-\lambda \f_5$ is also of rank $2$, which is a
  contradiction.\\
  Therefore $\f_6(e_3,e_4)=0$, and so $\la e_3,e_4\ra$ is Lagrangian in $(F_1^*,\f_6)$.
  We can complete the basis to $e_1,e_2,e_3,e_4$ so that
  $dx_6=\f_6=x_1x_3+x_2x_4$. Normalize $\f_5$ so that $dx_5=\f_5=x_1x_2$.
  All forms $dx_6+a \, dx_5$ are of rank $4$.

\item $\ell\cap\mathcal{Q}=\emptyset$.
This means that $\ell$ and $\mathcal{Q}$ intersect in two points with coordinates in the algebraic closure of
$\bk$. As this intersection is invariant by the Galois group, there must be a quadratic extension
$\bk'\supset \bk$ where the coordinates of the two points lie; the two points are conjugate by the Galois
automorphism of $\bk'|\bk$. Therefore, there is an element $a\in\bk^*$
such that $\bk'=\bk(\sqrt{a})$, $a$ is not a square in $\bk$, and the differentials
 $$
 dx_5=x_1x_2, \qquad dx_6=x_3x_4.
 $$
satisfy that the planes $\pi_1=\la x_1,x_2 \ra$ and $\pi_2=\la x_3,x_4 \ra$ are conjugate under the
Galois map $\sqrt{a}\mapsto -\sqrt{a}$. Write:
 \begin{eqnarray*}
  x_1 &=& y_1 + \sqrt{a} y_2, \\
  x_2 &=& y_3 + \sqrt{a} y_4, \\
  x_3 &=& y_1 - \sqrt{a} y_2, \\
  x_4 &=& y_3 - \sqrt{a} y_4, \\
  x_5 &=& y_5 + \sqrt{a} y_6, \\
  x_6 &=& y_5 - \sqrt{a} y_6,
 \end{eqnarray*}
where $y_1,\ldots, y_6$ are defined over $\bk$.
Then $dy_5=y_1y_3 + ay_2y_4$, $dy_6=y_1y_4 + y_2y_3$.

This is the ``canonical'' model. Two of these minimal algebras are not isomorphic over $\bk$ 
for different quadratic field extensions, since
the equivalence would be given by a $\bk$-isomorphism, therefore commuting with
the action of the Galois group.

The quadratic field extensions are parametrized by elements $a\in \bk^*/(\bk^*)^2- \{1\}$. Note that
for $a=1$, we recover case (1), where $dy_5+dy_6=(y_1+y_2)(y_3+y_4)$ and $dy_5-dy_6=(y_1+y_2)(y_3-y_4)$
are of rank $2$.

\end{enumerate}

\begin{rem}
If $\bk=\C$ (or any algebraically closed field) then case (4) does not appear.

For $\bk=\R$, we have that $\R^*/(\R^*)^2- \{1\}=\{-1\}$, and there is only one
minimal algebra in this case, given by
$dy_5=y_1y_3 - y_2y_4$, $dy_6=y_1y_4 + y_2y_3$.

The case $\bk=\Q$ is very relevant, as it corresponds to the classification of
rational homotopy types of nilmanifolds. Note that in this case the classes in
$\Q^*/(\Q^*)^2$ are parametrized bijectively by elements $\pm p_1p_2\ldots p_k$, where
$p_i$ are different primes, and $k\geq 0$.
In particular, if $a$ is a square in $\Q$ then we fall again in (1) above.
\end{rem}

\begin{rem}
Note that we get examples of distinct rational homotopy types of nilmanifolds which have the same
real homotopy type. Also, we get nilmanifolds with different real homotopy types
but the same complex homotopy type.
\end{rem}

\subsection*{Case $(4,1,1)$}

Now $F_1$ is $4$-dimensional, and $\bar{d}:F_2\inc \wedge^2 F_1$ determines an element $\f_5\in \wedge^2 F_1$.
Clearly, $\wedge^2(F_1\oplus F_2)=\wedge^2 F_1 \oplus (F_1 \ox F_2)$.
The differential $d:F_1\ox F_2\to \wedge^3F_1$ is given as wedge by $\f_5$. So if $\f_5$ is of rank $4$,
then this map is an isomorphism and
  $$
 \ker (d:\wedge^2(F_1\oplus F_2) \to \wedge^3(F_1\oplus F_2) )=\wedge^2 F_1.
  $$
So there cannot be an injective map
$\bar{d}:F_3 \to F_1\ox F_2$.
This shows that $\f_5$ must be of rank $2$, and therefore it determines  a plane
$\pi\subset F_1$. Now the closed elements are given as $\wedge^2F_1 \oplus (\pi\ox F_2)$. The differential
$\bar{d}:F_3\to \pi\ox F_2$ determines a line $\ell\subset\pi$. Let $x_1$ be a generator for $\ell$, and
$\pi=\la x_1,x_2\ra$. Then there is a basis $x_1,x_2,x_3,x_4$ such that $dx_5=x_1x_2$ and $dx_6= x_1 x_5 +
\f'$, where
$\f'\in \wedge^2 F_1$. We are allowed to change $x_5$ by $x_5'=x_5+v$ with $v\in F_1$. This has
the effect of changing $dx_6$ by adding $x_1 v$. This means that we may assume that $\f'$ does
not contain $x_1$, so $\f'\in \wedge^2 (F_1/\ell)$.
Actually, wedging $\f_6=dx_6\in \wedge^2 F_1 \oplus (\pi\ox F_2)$ by $x_1$, we get an element
$\f_6\, x_1 \in \wedge^3 F_1$ which is the image of $\f'$ under the map $\wedge^2 (F_1/\ell)
\stackrel{x_1}{\inc} \wedge^3 F_1$. It is then easy to see then that $\f'$ is well-defined
(independent of the choices of $F_2$, $F_3$).

We have the following cases:
  \begin{enumerate}
  \item $\f'=0$. So $dx_6= x_1 x_5$.
  \item $\f'$ is non-zero, so it is of rank $2$. Therefore it determines a plane $\pi'$ in $F_1/\ell$.
  If this is transversal to the line $\pi/\ell$, then $\f'=x_3x_4$ and we have that $dx_6=x_1x_5+x_3x_4$.
  \item If $\pi'$ contains $\pi/\ell$, then $\f'=x_2x_3$ and
  we have $dx_6=x_1x_5+x_2x_3$. 
  \end{enumerate}

\subsection*{Case $(3,3)$}
This case is very easy, since $F_1$ is three-dimensional, and $\bar{d}:F_2\to \wedge^2F_1$ must be an isomorphism.
So there exists a basis such that $dx_4=x_1x_2$, $dx_5=x_1x_3$ and $dx_6=x_2x_3$.

\subsection*{Case $(3,2,1)$}
We have a three-dimensional space $F_1$. Then there is a two-dimensional space $F_2$ with a map $\bar{d}:F_2\to
\wedge^2 F_1$. Note that any element in $F_2$ determines a plane in $F_1$. Intersecting those planes, we get a
line
$\ell\subset F_1$. Then the differential gives an isomorphism $h:F_2 \stackrel{\cong}{\to} F_1/\ell$
(defined up to a non-zero scalar).
Choosing $\ell=\la x_1 \ra$, we take basis such that $h(x_4)=x_2$ and $h(x_5)=x_3$. So
 $$
 dx_4=x_1x_2, \quad dx_5=x_1x_3\,.
 $$

Let us compute the closed elements in $\wedge^2 (F_1\oplus F_2)=
\wedge^2 F_1 \oplus (F_1\ox F_2) \oplus \wedge^2 F_2$.
Clearly, $d: \wedge^2F_2 \inc \wedge^2 F_1 \ox F_2$. Also
the map $d:F_1\ox F_2 \cong F_1\ox (F_1/\ell)\to \wedge^3 F_1$
is the map $(u,v)\mapsto u\wedge v\wedge x_1$. As $\im \, d=d(F_2)$, we have
that
  $$
  H^2(\wedge (F_1\oplus F_2), d)=  \wedge^2(F_1/\ell) \oplus \ker (F_1\ox F_2 \to \wedge^3 F_1),
  $$
and $F_3$ determines an element $\f_6$ in that space.
Let $\pi_4$, $\pi_5$ be the planes in $F_1$ corresponding to $dx_4$, $dx_5$. There are
vectors $v_2\in \pi_4$, $v_3\in \pi_5$ and $\lambda\in \bk$
so that $\f_6= \lambda x_2x_3+ v_2x_4+v_3x_5$. We have
the following cases:

\begin{enumerate}
\item Suppose that $\f_6^2 x_1 \neq 0$ (this condition is well-defined, independently of the choices of
$F_2$, $F_3$). This is an element in $\wedge^3 F_1 \ox \wedge^2 F_2 \cong x_1 \ox \wedge^2(F_1/\ell) \ox
\wedge^2 F_2 \cong  (\wedge^2 F_2)^2$. Taking an isomorphism
$\wedge^2 F_2\cong \bk$, we have that the class of $\f_6^2 x_1 \in (\wedge^2 F_2)^2\cong \bk$
gives a well-defined element in $\bk^*/(\bk^*)^2$.\\
The condition $\f_6^2 x_1 \neq 0$ translates into
$v_2,v_3,x_1$ being linearly independent. So we can arrange $x_2=a_2v_2$, $x_3=a_3v_3$, with $a_2,a_3\neq 0$.
Normalizing $x_6$, we can assume $a_2=1$. So
$dx_4=x_1x_2$, $dx_5=x_1x_3$, $dx_6=\lambda x_2x_3+x_2x_4 +a  x_3x_5$.
Note that the class defined by $\f_6^2 x_1$ is $-2a\in \bk^*/(\bk^*)^2$.
(If we change the basis $x_3'=\mu x_3$, $x_5'=\mu x_5$ we obtain $dx_6=x_2x_4 +a\mu^{-2} x_3'x_5'$.
We see again that $-2a$ is defined in $\bk^*/(\bk^*)^2$).\\
Changing the basis as $x'_4=x_4+\lambda x_3$, we get
$dx'_4=x_1x_2$, $dx_5=x_1x_3$, $dx_6=x_2x'_4 -\frac{a}{2} x_3x_5$.

\item Now suppose $\f_6^2 x_1 = 0$, $\f_6 x_1\not\in \wedge^3 F_1$
and $\f_6^2\not\in \wedge^3 F_1\ox F_2$ (again these conditions are independent of the choices of
$F_2$, $F_3$). Then $v_2v_3x_1=0$ and $v_2v_3\neq 0$.
We can choose the coordinates $x_2,x_3$ (and $x_4,x_5$
accordingly through $h$) so that $v_2=x_2$, $v_3=x_1$. Therefore
$\f_6= \lambda x_2x_3+ x_2x_4+x_1x_5$. Now the change of variable $x_4'=x_4+\lambda x_3$ gives the form
$dx_4'=x_1x_2$, $dx_5=x_1x_3$, $dx_6=x_2x_4'+x_1x_5$.

\item Suppose that $\f_6^2 \in \wedge^3F_1\ox F_2$ and $\f_6 x_1\not\in \wedge^3 F_1$.
Then $v_2v_3=0$ but $x_1$ is linearly independent with $\la v_2,v_3\ra$. Choose coordinates
so that $v_2=x_2$ and $v_3=0$. So $\f_6= \lambda x_2x_3+ x_2x_4$. The change of
variable $x_4'=x_4+\lambda x_3$ gives the form
$dx_4'=x_1x_2$, $dx_5=x_1x_3$, $dx_6=x_2x_4'$.

\item Suppose that $\f_6x_1\in \wedge^3 F_1$, $\f_6^2\neq 0$. So that we can choose $v_2=x_1$, $v_3=0$. We
have $dx_4=x_1x_2$, $dx_5=x_1x_3$, $dx_6=\lambda x_2x_3 + x_1x_4$, where $\lambda\neq 0$.
Now take $x_3'=\lambda x_3$ and
$x_5'=\lambda x_5$. So $dx_4=x_1x_2$, $dx'_5=x_1x'_3$, $dx_6=x_2x'_3 + x_1x_4$

\item Finally, we have $\f_6x_1\in \wedge^3 F_1$, $\f_6^2= 0$ and this gives the minimal algebra
$dx_4=x_1x_2$, $dx_5=x_1x_3$, $dx_6= x_1x_4$.
\end{enumerate}

\subsection*{Case $(3,1,2)$}
We have a $3$-dimensional vector space $F_1$. Then $\bar{d}:F_2\to \wedge^2F_1$ determines a well-defined plane
$\pi\subset F_1$. Looking at $\wedge^2(F_1\oplus F_2)=\wedge^2 F_1 \oplus (F_1\ox F_2)$, we see that the
closed elements are $\wedge^2 F_1 \oplus (\pi \ox F_2)$. The differential is defined by
 \begin{equation}\label{eqn:312}
  \hat{d}:F_3 \to H^2(\wedge(F_1\oplus F_2), d)= (\wedge^2 F_1/ d(F_2)) \oplus (\pi\ox F_2)\, ,
 \end{equation}
where the projection $\bar{d}: F_3\to \pi\ox F_2$ is injective, hence an isomorphism.
So we identify $F_3\cong \pi\ox F_2$.
Let $x_1,x_2$ be a basis for $\pi$, and $x_5,x_6$ the corresponding basis of $F_3$ through the above
isomorphism. So $dx_4=x_1x_2$, $dx_5=x_1x_4 + v_5$, $dx_6=x_2x_4+v_6$, where $v_5,v_6\in
\wedge^2 F_1/ d(F_2)$.

The map (\ref{eqn:312}) together with $\bar{d}^{-1}:\pi\ox F_2 \to F_3$
gives a map $\phi:\pi\ox F_2 \to (\wedge^2 F_1/ d(F_2))$. It is easy to see that the pairing
$F_1\ox \wedge^2 F_1\to \wedge^3 F_1$ induces a non-degenerate pairing
$\pi \ox (\wedge^2 F_1/ d(F_2))\to \wedge^3 F_1$,
and hence an isomorphism $(\wedge^2 F_1/ d(F_2)) \cong \pi^*\ox \wedge^3 F_1$. Hence
$\phi :\pi\ox F_2 \to\pi^*\ox \wedge^3 F_1$, and using that $\pi^* \cong\pi \ox \wedge^2 \pi^*$,
we finally get a map
 $$
 \phi: \pi \to \pi\ox (\wedge^2 \pi^*\ox \wedge^3 F_1\ox F_2^*).
 $$
This gives an endomorphism of $\pi$ defined up to a constant.

Now let us see the indeterminacy of $\phi$. With the change of variables
$x'_4 = x_4+\mu x_3 + \nu x_2 + \eta x_1$ we get $dx_5=x_1x'_4 + v'_5$,
$dx_6=x_2x'_4 + v'_6$, where
$v'_5=v_5-\mu x_1x_3$, $v'_6=v_6-\mu x_2x_3$. Therefore the corresponding map $\phi'=\phi-\mu \, \mathrm{Id}$.
So $\phi$ is defined up to addition of a multiple of the identity.

We get the following classification:
 \begin{enumerate}
  \item Suppose that $\phi$ is zero (or a scalar multiple of the identity). Then $dx_4= x_1x_2$, $dx_5=x_1x_4$,
  $dx_6= x_2x_4$.
  \item Suppose that $\phi$ is diagonalizable. Adding a multiple of the identity, we can assume that
  one of the eigenvalues is zero and the other is not. Let $x_2$ generate the image and $x_1$ be
  in the kernel. Then $dx_4= x_1x_2$, $dx_5=x_1x_4$, $dx_6= x_2x_4 + x_2x_3$.
  \item Suppose that $\phi$ is not diagonalizable. Adding a multiple of the identity, we can assume
  that the eigenvalues are zero. Let $x_1$ generate the image, so that
      $x_1$ is in the kernel. Then $dx_4= x_1x_2$, $dx_5=x_1x_4$, $dx_6= x_2x_4+x_1x_3$.
  \item Finally, $\phi$ can be non-diagonalizable if $\bk$ is not algebraically closed.
  To diagonalize $\phi$ we need a quadratic extension of $\bk$. Let $a\in \bk^*$ so that
  $\phi$ diagonalizes over $\bk'=\bk(\sqrt{a})$.
 If we arrange $\phi$ to have zero trace (by adding a multiple of the identity), then
 the minimum polynomial of $\phi$ is $T^2-a$. So
 we can choose a basis such that $\phi(x_1)=x_2$, $\phi(x_2)= a x_1$.
 Thus  $dx_4= x_1x_2$, $dx_5= x_1x_4 + x_2x_3$,
 $dx_6= x_2x_4 + a x_1x_3$. The minimal algebras are parametrized by $a\in \bk^*/(\bk^*)^2-\{1\}$.
 (The value $a=1$ recovers case (2)).
 \end{enumerate}

\subsection*{Case $(3,1,1,1)$}
Now $F_1$ is of dimension $3$. We have a one-dimensional space given as the image of
$\bar{d}:F_2\inc \wedge^2 F_1$,
which determines a plane $\pi\subset F_1$.
The closed elements in $\wedge^2 (F_1\oplus F_2)$
are $\wedge^2F_1 \oplus (\pi\ox F_2)$. Therefore, $\f_5=dx_5$ determines a line
$\ell\subset \pi$. But it also determines an element in $\wedge^2 F_1$, up to $d(F_2)$ and
up to $\ell\wedge F_1$, i.e. in $\wedge^2 (F_1/\ell)$. Then
 \begin{enumerate}
  \item $dx_4=x_1x_2$, $dx_5=x_1x_4$. Now we compute the closed elements in
  $\wedge^2(F_1\oplus F_2 \oplus F_3)$ to be $\wedge^2F_1 \oplus (\pi\ox F_2)\oplus(\l\ox F_3)$.
  The element $\f_6=dx_6$ has non-zero last component in $\ell\ox F_3$. It is well-defined
   up to $\ell\wedge F_1$ and up to $\ell\ox F_2$. There are several cases:
   \begin{enumerate}
   \item $\f_6\in \ell\ox F_3$. Then $dx_6=x_1x_5$.
   \item $\f_6 \in (\pi\ox F_2) \oplus  (\ell\ox F_3)$. Then $dx_6=x_2 x_4 +x_1x_5$.
   \item $\f_6 \in \wedge^2 F_1 \oplus  (\ell\ox F_3)$, then $dx_6=x_2x_3+x_1x_5$.
   \item $\f_6$ has non-zero components in all summands. Then $dx_6=\lambda x_2x_3 + x_2x_4+x_1x_5$. We
   can arrange $\lambda=1$ by choosing $x_3'=\lambda x_3$.
 \end{enumerate}
(We can check that these cases are not equivalent: the first one is characterised by $\f_6 x_1=0$;
the second one by $\f_6 x_1\neq 0$, $\f_6\f_5=0$; the third one by $\f_6 x_1\neq0$, $\f_6\f_5\neq 0$,
$\f_6\f_4=0$; the last one by $\f_6 x_1\neq 0$, $\f_6\f_5\neq 0$, $\f_6\f_4\neq 0$).

  \item $dx_4=x_1x_2$, $dx_5=x_1x_4+x_2x_3$. Then the closed elements  in
  $\wedge^2(F_1\oplus F_2 \oplus F_3)$ are those in
   $$
   \wedge^2F_1 \oplus (\pi\ox F_2)\oplus  \la x_1x_5+x_4x_3\ra .
   $$
   So $\f_6= ax_1x_3 +b x_2x_3+ c x_1x_4+ d x_2x_4 + x_1x_5+x_4x_3$. The change of
   variables $x_6'=x_6-bx_5$ arranges $b=0$. Then the change of variables
     $x'_3=-dx_2+x_3$ and $x'_5=ax_3+x_5$ arranges $a=0$ and $d=0$. Thus $\f_6= c x_1x_4 + x_1x_5+x_4x_3$.
   Finally $x_3'=-\frac{c}{2}x_1+x_3$, $x_5'=\frac{c}{2}x_4+x_5$ arranges $c=0$. Hence $\f_6=x_1x_5-x_3x_4$.

\end{enumerate}

\subsection*{Case $(2,1,2,1)$}
Now we have a $2$-dimensional space $F_1$, and an isomorphism $\bar{d}:F_2\to \wedge^2 F_1$.
Also $\bar{d}:F_3 \to \wedge^2(F_1\oplus F_2)/\wedge^2 F_1 =F_1\ox F_2$ is an isomorphism. Then
there is a basis for $F_1\oplus F_2\oplus F_3$ such that
 $$
 dx_3=x_1x_2, \ dx_4=x_1x_3 \ \ \mathrm{and} \ dx_5=x_2x_3\, .
 $$
Let us compute the closed elements in $\wedge^2(F_1\oplus F_2\oplus F_3)$.
First, $d:F_2\ox F_3  \to \wedge^2 F_1 \ox F_3$ is an isomorphism; second
$d:\wedge^2 F_3\inc F_1\ox F_2 \ox F_3$ is an injection; finally,
$d: F_1\ox F_3 \cong F_1 \ox F_1\ox F_2 \to \wedge^2 F_1\ox F_2$. So the kernel of $d$
is isomorphic to $\wedge^2(F_1\oplus F_2)\oplus (s^2 F_1\ox F_2)$.
Then
  $$
  \f_6\in H^2(\wedge(F_1\oplus F_2\oplus F_3),d)= s^2 F_1 \subset F_1\ox F_1\cong F_1\ox F_3
  $$
determines a non-zero quadratic form on $F_1$ up to multiplication by scalar, call it $A$.
(Here we use the natural identification $F_3\cong F_1$, $x_4\mapsto x_1$, $x_5\mapsto x_2$,
defined up to scalar).

We have the following cases:
\begin{enumerate}
 \item If $\rk(A)=1$, then $A$ has non-zero kernel. We get a basis such that
 $dx_6=x_1x_4$.
 \item If $\rk(A)=2$ then $\det(A)\neq 0$.
This determines a $2\x 2$-matrix $A$ defined up to conjugation $A\mapsto M^T A M$ and
up to $A\mapsto \lambda A$. Note that the class of the determinant $a=\det(A)\in \bk^*/(\bk^*)^2$
is well-defined. Take a basis diagonalizing $A$. We can arrange that
$A=\begin{pmatrix} 1 & 0 \\ 0 & a \end{pmatrix}$.
So $dx_6=x_1x_4+ a x_2x_5$. 
(Note that for $a=0$ we recover case (1)).
\end{enumerate}

\subsection*{Case $(2,1,1,2)$}
Now $F_1$ is $2$-dimensional, and $\bar{d}:F_2 \to \wedge^2 F_1$ is an isomorphism.
$F_3$ is one-dimensional and $\bar{d}: F_3 \to \wedge^2 (F_1\oplus F_2) /\wedge^2 F_1=F_1\ox F_2$.
Therefore there exists a line $\ell\subset F_1$ such that $d(F_3)=\ell\ox F_2$.

We compute the closed elements in $\wedge^2(F_1\oplus F_2\oplus F_3) = \wedge^2 F_1 \oplus (F_1\ox F_2) \oplus
(F_1\ox F_3) \oplus (F_2\ox F_3)$. As $d: F_1\ox F_3 \to \wedge^2 F_1\ox F_2$ has kernel $\ell\ox F_3$ and
$d:F_2\ox F_3\inc \wedge^2 F_1\ox F_3$, we have that
 $$
 H^2(\wedge(F_1\oplus F_2\oplus F_3),d)= ((F_1/\ell)\ox F_2)\oplus (\ell\ox F_3)\, .
 $$
As $\bar{d}:F_4 \to \wedge^2(F_1\oplus F_2\oplus F_3)/\wedge^2 (F_1\oplus F_2)$
is injective, and $\dim(\ell\ox F_3)=1$, it cannot be that $f_4=2$.

\subsection*{Case $(2,1,1,1,1)$}
We work as in the previous case. Now $\bar{d}:F_4 \to ((F_1/\ell)\ox F_2) \oplus (\ell \ox F_3)$ produces
an isomorphism $F_4 \cong \ell\ox F_3$ and hence a map
 $$
 \phi: \ell\ox F_3 \to (F_1/\ell)\ox F_2.
 $$
Note that this map is well-defined, independent of the choice of $F_3$ satisfying $W_2\oplus F_3=W_3$.
We have the following cases
\begin{enumerate}
\item Suppose that $\phi=0$. So there is a basis such that
$dx_3 = x_1x_2$, $dx_4 = x_1x_3$, $dx_5 = x_1x_4$, where we have chosen
$\ell=\la x_1 \ra$, $F_1=\la x_1,x_2\ra$.
We can easily compute
 $$
 H^2(\wedge(x_1,x_2,x_3,x_4,x_5),d)=\la x_1x_5, x_2x_3, x_2x_5-x_3x_4 \ra\, .
 $$
Then
 \begin{equation}\label{eqn:f6}
  \f_6= dx_6 = a x_1x_5  + b x_2x_3+ c(x_2x_5-x_3x_4 )\, .
 \end{equation}
We have
 \begin{enumerate}
  \item If $\f_6 x_1=0$ then $b=c=0$. We can choose generators so that $dx_6=x_1x_5$.
  \item If $\f_6x_1\neq 0$ and $\f_6 x_1x_2=0$, then $c=0$ and $a,b\neq 0$. We can
  arrange $a=1$ by normalizing $x_6$ and then do the change of variables $x_2'=bx_2$
  $x_3'=bx_3$, $x_4'=bx_4$, $x_5'=bx_5$, $x_6'=bx_6$. This produces
  an equation as (\ref{eqn:f6}) with $b=1$. Hence $dx_6 = x_1x_5 + x_2x_3$.
  \item If $\f_6 x_1x_2 \neq 0$, then $c\neq 0$. We can arrange $c=1$ by normalizing $x_6$.
   Now put $x_2'=x_2+ax_1$ to arrange $a=0$. Finally take $x_5'=x_5+bx_3$, $x_4'=x_4+bx_2$ to
   be able to put $b=0$. So $dx_6=x_2x_5-x_3x_4$.
  \end{enumerate}
\item Suppose that $\phi\neq 0$. Then there is a basis for $F_1\oplus F_2\oplus F_3\oplus F_4$
 such that $dx_3 = x_1x_2$, $dx_4 = x_1x_3$, $dx_5 = x_1x_4+x_2x_3$.
 We can easily compute
 $$
 H^2(\wedge(x_1,x_2,x_3,x_4,x_5),d)=\la x_1x_4, x_1x_5+x_2x_4, x_2x_5-x_3x_4 \ra\, .
 $$
Then
 $$
  \f_6= dx_6 = a x_1x_4  + b (x_1x_5+x_2x_4) + c(x_2x_5-x_3x_4 )\, .
 $$
We have
 \begin{enumerate}
  \item If $\f_6 x_1x_2=0$ then $c=0$.  We can suppose $b=1$, and put $x_2'=x_2+\frac{a}{2}x_1$, $x_5'=x_5+\frac{a}{2}x_4$,
  to arrange $a=0$. So $dx_6=x_1x_5+x_2x_4$.
  \item If $\f_6x_1x_2\neq 0$ then we can suppose $c=1$.
  Put $x_2'=bx_1+x_2$ and $x_5'=bx_4+x_5$ to eliminate $b$. Finally do the change of
  variables $x_4'=x_4-\frac{a}{2}x_2$, $x_5'=x_5-\frac{a}{2}x_3$ and $x_6'=-ax_5+x_6$ to arrange $a=0$.
  Hence $dx_6=x_2x_5-x_3x_4$.
  \end{enumerate}

\end{enumerate}

\subsection*{Classification of minimal algebras over $\bk$}
Let $\bk$ be any field of characteristic different from $2$. The above work can be summarized in Table 1.

\begin{table}[h]\label{table:1}
\caption{Classification of minimal algebras over $\bk$}
\begin{tabular}{|c|c|c|c|c|c|c|}
\hline
 $(f_i)$    & $dx_3$ & $dx_4$ & $dx_5$ & $dx_6$ & $\g$ \\
\hline
(6,0)    & 0 & 0 & 0 & 0 & $A_6$\\
\hline
(5,1)   & 0 & 0 & 0 & $x_1x_2$ & $L_3\oplus A_3$\\
\hline
     & 0 & 0 & 0 & $x_1x_2+x_3x_4$ & $L_{5,1}\oplus A_1$\\
\hline
(4,2)  & 0 & 0 & $x_1x_2$ & $x_1x_3$ & $L_{5,2}\oplus A_1$\\
\hline
     & 0 & 0 & $x_1x_2$ & $x_3x_4$ & $L_3\oplus L_3$\\
\hline
     & 0 & 0 & $x_1x_2$ & $x_1x_3+x_2x_4$ & $L_{6,1}$\\
\hline
     & 0 & 0 & $x_1x_3+ax_2x_4$ & $x_1x_4+x_2x_3$ & $L_{6,2}^a, a\in\Lambda-\{1\}$\\
\hline
(4,1,1)   & 0 & 0 & $x_1x_2$ & $x_1x_5$ & $L_4\oplus A_2$\\
\hline
     & 0 & 0 & $x_1x_2$ & $x_1x_5+x_3x_4$ & $L_{6,3}$\\
\hline
     & 0 & 0 & $x_1x_2$ & $x_1x_5+x_2x_3$ & $L_{5,3}\oplus A_1$\\
\hline
(3,3)    & 0 & $x_1x_2$ & $x_1x_3$ & $x_2x_3$ & $L_{6,4}$\\
\hline
(3,2,1)   & 0 & $x_1x_2$ & $x_1x_3$ & $x_1x_4$ & $L_{6,5}$\\
\hline
     & 0 & $x_1x_2$ & $x_1x_3$ & $x_2x_4$ & $L_{6,6}$\\
\hline
     & 0 & $x_1x_2$ & $x_1x_3$ & $x_1x_5+x_2x_4$ & $L_{6,7}$\\
\hline
     & 0 & $x_1x_2$ & $x_1x_3$ & $x_2x_4+ax_3x_5$ & $L_{6,8}^a, a\in\Lambda$\\
\hline
     & 0 & $x_1x_2$ & $x_1x_3$ & $x_1x_4+x_2x_3$ & $L_{6,9}$\\
\hline
(3,1,2)  & 0 & $x_1x_2$ & $x_1x_4$ & $x_2x_4$ & $L_{5,5}\oplus A_1$\\
\hline
     & 0 & $x_1x_2$ & $x_1x_4$ & $x_2x_3+x_2x_4$ & $L_{6,10}$\\
\hline
     & 0 & $x_1x_2$ & $x_1x_4$ & $x_1x_3+x_2x_4$ & $L_{6,11}$\\
\hline
     & 0 & $x_1x_2$ & $x_1x_4+x_2x_3$ & $x_1x_3+ax_2x_4$ & $L_{6,12}^a, a\in\Lambda-\{1\}$\\
\hline
(3,1,1,1) & 0 & $x_1x_2$ & $x_1x_4$ & $x_1x_5$ & $L_{5,4}\oplus A_1$\\
\hline
     & 0 & $x_1x_2$ & $x_1x_4$ & $x_1x_5+x_2x_3$ & $L_{6,13}$\\
\hline
     & 0 & $x_1x_2$ & $x_1x_4$ & $x_1x_5+x_2x_4$ & $L_{5,6}\oplus A_1$\\
\hline
     & 0 & $x_1x_2$ & $x_1x_4$ & $x_1x_5+x_2x_3+x_2x_4$ & $L_{6,14}$\\
\hline
     & 0 & $x_1x_2$ & $x_1x_4+x_2x_3$ & $x_1x_5-x_3x_4$ & $L_{6,15}$\\
\hline
(2,1,2,1)   & $x_1x_2$ & $x_1x_3$ & $x_2x_3$ & $x_1x_4$ & $L_{6,16}$\\
\hline
     & $x_1x_2$ & $x_1x_3$ & $x_2x_3$ & $x_1x_4+ax_2x_5$ & $L_{6,17}^a, a\in\Lambda$\\
\hline
(2,1,1,1,1)    & $x_1x_2$ & $x_1x_3$ & $x_1x_4$ & $x_1x_5$ & $L_{6,18}$\\
\hline
     & $x_1x_2$ & $x_1x_3$ & $x_1x_4$ & $x_1x_5+x_2x_3$ & $L_{6,19}$\\
\hline
     & $x_1x_2$ & $x_1x_3$ & $x_1x_4$ & $x_2x_5-x_3x_4$ & $L_{6,20}$\\
\hline
     & $x_1x_2$ & $x_1x_3$ & $x_1x_4+x_2x_3$ & $x_1x_5+x_2x_4$ & $L_{6,21}$\\
\hline
     & $x_1x_2$ & $x_1x_3$ & $x_1x_4+x_2x_3$ & $x_2x_5-x_3x_4$ & $L_{6,22}$\\
\hline
\end{tabular}
\end{table}

The first $4$ columns display the non-zero differentials, and the fifth one is a labelling
of the corresponding Lie algebra. Denote $\Lambda=\bk^* /(\bk^*)^2$. There are $4$ families which are indexed by a parameter $a$: $L_{6,2}^a$ and $L_{6,12}^a$, which are indexed by $a\in\Lambda-\{1\}$; $L_{6,8}^a$ and $L_{6,17}^a$, which are indexed by $a\in\Lambda$. Thus, if we denote by $r$ the cardinality of $\Lambda$, we obtain $28+2(r-1)+2r=26+4r$ minimal algebras.

If $\bk$ is algebraically closed (e.g. $\bk=\C$), then there are $30$ minimal
models over $\bk$. We can assume $a=1$ in lines $L_{6,8}^a$ and $L_{6,17}^a$, while lines $L_{6,2}^a$ and $L_{6,12}^a$ disappear (actually, they are equivalent to lines $L_3\oplus L_3$ and $L_{10}$ respectively).

Notice that when we set $a=0$, the minimal algebra $L_{6,2}^a$ reduces to $L_{6,1}$; the minimal algebra $L_{6,8}^a$
reduces to $L_{6,6}$; the minimal algebra $L_{6,12}^a$ reduces to $L_{6,9}$; and the minimal algebra
$L_{6,17}^a$ reduces to $L_{6,16}$.

\medskip

Finally, recall that this classification yields the classification of nilpotent Lie algebras of
dimension $6$ over $\bk$.

\section{$\bk$-homotopy types of $6$-dimensional nilmanifolds}

In the case $\bk=\Q$, the classification in Table 1 gives all rational homotopy types of $6$-dimensional
nilmanifolds. Note that $\Q^*/(\Q^* )^2$ is indexed by rational numbers up to squares, hence by
$a=\pm p_1p_2\ldots p_k$, where $p_i$ are different primes, and $k\geq 0$.

Let us explicitly give the classification of real homotopy types of $6$-dimensional nilmanifolds.
Note that $\R^*/(\R^*)^2=\{\pm 1\}$. Therefore there are $34$ real homotopy types, and
we have Table 2.

\begin{table}[h!]\label{table:2}
\caption{Real homotopy types of $6$-dimensional nilmanifolds}
\begin{tabular}{|c|c|c|c|c|c|c|c|c|c|c|}
\hline
 $(f_i)$    & $dx_3$ & $dx_4$ & $dx_5$ & $dx_6$ & $\g$ & \cite{Cerezo} & $b_1$ & $b_2$ & $b_3$ & $\sum_ib_i$\\
\hline
(6,0)   & 0 & 0 & 0 & 0 & $A_6$ & $-$ & 6 & 15 & 20 & 64\\
\hline
(5,1)   & 0 & 0 & 0 & $x_1x_2$ & $L_3\oplus A_3$ & $-$ & 5 & 11 & 14 & 48\\
\hline
     & 0 & 0 & 0 & $x_1x_2+x_3x_4$ & $L_{5,1}\oplus A_1$ & $-$ & 5 & 9 & 10 & 40\\
\hline
(4,2)  & 0 & 0 & $x_1x_2$ & $x_1x_3$ & $L_{5,2}\oplus A_1$ & $-$ & 4 & 9 & 12 & 40\\
\hline
     & 0 & 0 & $x_1x_2$ & $x_3x_4$ & $L_3\oplus L_3$ & $-$ & 4 & 8 & 10 & 36\\
\hline
     & 0 & 0 & $x_1x_2$ & $x_1x_3+x_2x_4$ & $L_{6,1}$ & $\mathcal{N}_{6,24}$ & 4 & 8 & 10 & 36\\
\hline
     & 0 & 0 & $x_1x_3-x_2x_4$ & $x_1x_4+x_2x_3$ & $L_{6,2}$ & $\mathcal{N}_{6,23}$ & 4 & 8 & 10 & 36\\
\hline
(4,1,1)   & 0 & 0 & $x_1x_2$ & $x_1x_5$ & $L_4\oplus A_2$ & $-$ & 4 & 7 & 8 & 32\\
\hline
     & 0 & 0 & $x_1x_2$ & $x_1x_5+x_3x_4$ & $L_{6,3}$ & $\mathcal{N}_{6,22}$ & 4 & 6 & 6 & 28\\
\hline
     & 0 & 0 & $x_1x_2$ & $x_1x_5+x_2x_3$ & $L_{5,3}\oplus A_1$ & $-$ & 4 & 7 & 8 & 32\\
\hline
(3,3)     & 0 & $x_1x_2$ & $x_1x_3$ & $x_2x_3$ & $L_{6,4}$ & $\mathcal{N}_{6,21}$ & 3 & 8 & 12 & 36\\
\hline
(3,2,1)   & 0 & $x_1x_2$ & $x_1x_3$ & $x_1x_4$ & $L_{6,5}$ & $\mathcal{N}_{6,20}$ & 3 & 6 & 8 & 28\\
\hline
     & 0 & $x_1x_2$ & $x_1x_3$ & $x_2x_4$ & $L_{6,6}$ & $\mathcal{N}_{6,18}$ & 3 & 6 & 8 & 28\\
\hline
     & 0 & $x_1x_2$ & $x_1x_3$ & $x_1x_5+x_2x_4$ & $L_{6,7}$ & $\mathcal{N}_{6,17}$ & 3 & 5 & 6 & 24\\
\hline
     & 0 & $x_1x_2$ & $x_1x_3$ & $x_2x_4+x_3x_5$ & $L_{6,8}^+$ & $\mathcal{N}_{6,15}$ & 3 & 5 & 6 & 24\\
\hline
     & 0 & $x_1x_2$ & $x_1x_3$ & $x_2x_4-x_3x_5$ & $L_{6,8}^-$ & $\mathcal{N}_{6,16}$ & 3 & 5 & 6 & 24\\
\hline
     & 0 & $x_1x_2$ & $x_1x_3$ & $x_1x_4+x_2x_3$ & $L_{6,9}$ & $\mathcal{N}_{6,19}$ & 3 & 6 & 8 & 28\\
\hline
(3,1,2)  & 0 & $x_1x_2$ & $x_1x_4$ & $x_2x_4$ & $L_{5,5}\oplus A_1$ & $-$ & 3 & 5 & 6 & 24\\
\hline
     & 0 & $x_1x_2$ & $x_1x_4$ & $x_2x_3+x_2x_4$ & $L_{6,10}$ & $\mathcal{N}_{6,12}$ & 3 & 5 & 6 & 24\\
\hline
     & 0 & $x_1x_2$ & $x_1x_4$ & $x_1x_3+x_2x_4$ & $L_{6,11}$ & $\mathcal{N}_{6,13}$ & 3 & 5 & 6 & 24\\
\hline
     & 0 & $x_1x_2$ & $x_1x_4+x_2x_3$ & $x_1x_3-x_2x_4$ & $L_{6,12}$ & $\mathcal{N}_{6,14}$ & 3 & 5 & 6 & 24\\
\hline
(3,1,1,1) & 0 & $x_1x_2$ & $x_1x_4$ & $x_1x_5$ & $L_{5,4}\oplus A_1$ & $-$ & 3 & 5 & 6 & 24\\
\hline
     & 0 & $x_1x_2$ & $x_1x_4$ & $x_1x_5+x_2x_3$ & $L_{6,13}$ & $\mathcal{N}_{6,11}$ & 3 & 5 & 6 & 24\\
\hline
     & 0 & $x_1x_2$ & $x_1x_4$ & $x_1x_5+x_2x_4$ & $L_{5,6}\oplus A_1$ & $-$ & 3 & 5 & 6 & 24\\
\hline
     & 0 & $x_1x_2$ & $x_1x_4$ & $x_1x_5+x_2x_3+x_2x_4$ & $L_{6,14}$ & $\mathcal{N}_{6,10}$ & 3 & 5 & 6 & 24\\
\hline
     & 0 & $x_1x_2$ & $x_1x_4+x_2x_3$ & $x_1x_5-x_3x_4$ & $L_{6,15}$ & $\mathcal{N}_{6,9}$ & 3 & 4 & 4 & 20\\
\hline
(2,1,2,1)   & $x_1x_2$ & $x_1x_3$ & $x_2x_3$ & $x_1x_4$ & $L_{6,16}$ & $\mathcal{N}_{6,8}$ & 2 & 4 & 6 & 20\\
\hline
     & $x_1x_2$ & $x_1x_3$ & $x_2x_3$ & $x_1x_4+x_2x_5$ & $L_{6,17}^+$ & $\mathcal{N}_{6,6}$ & 2 & 4 & 6 & 20\\
\hline
     & $x_1x_2$ & $x_1x_3$ & $x_2x_3$ & $x_1x_4-x_2x_5$ & $L_{6,17}^-$ & $\mathcal{N}_{6,7}$ & 2 & 4 & 6 & 20\\
\hline
(2,1,1,1,1)    & $x_1x_2$ & $x_1x_3$ & $x_1x_4$ & $x_1x_5$ & $L_{6,18}$ & $\mathcal{N}_{6,5}$ & 2 & 3 & 4 & 16\\
\hline
     & $x_1x_2$ & $x_1x_3$ & $x_1x_4$ & $x_1x_5+x_2x_3$ & $L_{6,19}$ & $\mathcal{N}_{6,4}$ & 2 & 3 & 4 & 16\\
\hline
     & $x_1x_2$ & $x_1x_3$ & $x_1x_4$ & $x_2x_5-x_3x_4$ & $L_{6,20}$ & $\mathcal{N}_{6,2}$ & 2 & 2 & 2 & 12\\
\hline
     & $x_1x_2$ & $x_1x_3$ & $x_1x_4+x_2x_3$ & $x_1x_5+x_2x_4$ & $L_{6,21}$ & $\mathcal{N}_{6,3}$ & 2 & 3 & 4 & 16\\
\hline
     & $x_1x_2$ & $x_1x_3$ & $x_1x_4+x_2x_3$ & $x_2x_5-x_3x_4$ & $L_{6,22}$ & $\mathcal{N}_{6,1}$ & 2 & 2 & 2 & 12\\
\hline
\end{tabular}
\end{table}

Notice that all these minimal algebras do actually correspond to nilmanifolds, since they
are defined over $\Q$.

The fifth column is a labeling of the nilpotent Lie algebra corresponding to the associated minimal algebra;
for instance, when we write $L_{5,1}\oplus A_1$ we mean that the
$6$-dimensional nilpotent Lie algebra splits as the sum
of a $5$-dimensional nilpotent Lie algebra with an abelian Lie algebra of
dimension $1$. In geometric terms, the corresponding $6$-dimensional nilmanifold
is the product of the corresponding $5$-dimensional nilmanifold with $S^1$.

The sixth column refers to the list contained in \cite{Cerezo}.
In \cite{Cerezo}, the problem of classifying $6$-dimensional nilmanifolds is treated in a
different way. Cerezo classifies $6$-dimensional nilpotent Lie algebras over $\R$.
Let us explain how we derived the correspondence between our list and his.
Consider, for example, the nilmanifold with real minimal model associated
to the Lie algebra $L_{6,14}$. The $6$-dimensional nilpotent Lie algebra $\N_{6,10}$ considered
by Cerezo has generators $\langle X_1,\ldots,X_6\rangle$ and commutators
 $$
 [X_1,X_2]=X_4, \ [X_1,X_4]=X_5, \  [X_1,X_5]=X_6, \ [X_2,X_3]=X_6 \quad \text{and} \quad [X_2,X_4]=X_6.
 $$
Using the correspondence between nilpotent Lie algebras and minimal algebras, according to formula
(\ref{eq:minimal-alg}), we associate the Lie algebra $\N_{6,10}$ to the nilmanifold $L_{6,14}$. To check the other correspondences, it might be necessary to switch variables.

The last columns contain the Betti numbers of the nilmanifolds, and the total dimension of the cohomology.
The computation of the Betti numbers has been perfomed using the following facts:
\begin{itemize}
\item Thanks to Poincar\'{e} duality, we have $b_0=b_6$, $b_1=b_5$ and $b_2=b_4$, where $b_i=\dim H^i(N)$.
\item Nilmanifolds are parallelizable and parallelizable manifolds have Euler characteristic zero, so
 \begin{equation}\label{eq:225}
 \sum_{i=0}^n(-1)^i\, b_i=0 \, .
 \end{equation}
\item to compute $b_3$ we use Poincar\'{e} duality and (\ref{eq:225}); we obtain
 \begin{equation}\label{eq:226}
 b_3=2(b_0-b_1+b_2).
 \end{equation}
\item $b_0=1$ and $b_1=f_1$.
\end{itemize}
Thus it is enough to compute $b_2$ to obtain the whole information.
As an example, we compute the Betti numbers of the nilmanifold $N=L_{6,12}$.
We have $b_0=b_6=1$ and $b_1=b_5=f_1=3$.
The computation of $b_2$ goes as follows: a basis for $\ker d \cap \wedge^2 V$ is given by
 $$
 \langle x_1x_2,x_1x_3,x_1x_4,x_1x_5+x_2x_6,x_1x_6-x_2x_5,x_2x_3,x_2x_4,x_3x_4+x_2x_6\rangle\, ,
 $$
and $\ker d \cap \wedge^2 V$ is $8$-dimensional. On the other hand, $\dim (\im \, d \cap \wedge^2V)= n-f_1= 3$.
Thus $b_2=\dim H^2(N)= 8-3=5=b_4$. This gives, according to (\ref{eq:226}), $b_3=6$ and $\sum_ib_i=24$.

Note that $\min \dim H^*(N) =12$. This agrees with \cite{Lupton}, proposition 3.3.

We end up with the proof of Theorem \ref{thm:main2}.

\noindent \emph{Proof of Theorem \ref{thm:main2}.} If $(\wedge V,d)$ is a  minimal model
of a nilmanifold, then it is defined over $\Q$. So it is a minimal algebra in Table 1,
with the condition that $a\in \Q^*$ if we are dealing with any of the four cases with parameter.
(This element $a$ is an invariant of the minimal algebra.)

Now, two nilmanifolds with minimal models $(\wedge V_1,d)$, $(\wedge V_2,d)$ are of the
same $\bk$-homotopy type if
$(\wedge V_1\ox \bk,d)$ and $(\wedge V_2 \ox \bk,d)$ are isomorphic (over $\bk$). Then, first they should be
in the same line in Table 1;
second, if they correspond to a parameter case, with respective parameters
$a_1, a_2\in \Q^*$, then the $\bk$-minimal models are isomorphic
if and only if there exists $\lambda \in \bk^*$
with $a_1=\lambda^2 a_2$. Therefore $a_1,a_2$ define the same class in $\Q^* /((\bk^*)^2\cap \Q^*)$. \fine

\begin{rem}
 A consequence of Theorem \ref{thm:main2} is that:
  \begin{enumerate}
  \item There are nilmanifolds which have the same real homotopy type but different rational homotopy type.
  \item There are nilmanifolds which have the same complex homotopy type but different real homotopy type.
  \item There are nilmanifolds $M_1,M_2$ for which the CDGAs $(\Omega^*(M_1),d)$ and $(\Omega^*(M_2),d)$
   are joined by chains of quasi-isomorphisms (i.e., they have the same \emph{real} minimal model),
   but for which there is no $f:M_1\to M_2$ inducing a quasi-isomorphism $f^* :(\Omega^*(M_2),d) \to
   (\Omega^*(M_1),d)$.
   Just consider $M_1,M_2$ not of the same rational homotopy type. If there was such $f$,
   then there is a map on the rational minimal models $f^*:(\wedge V_2,d) \to (\wedge V_1,d)$ such that
     $f^*_\R:(\wedge V_2\ox \R,d) \to (\wedge V_1\ox \R,d)$ is an isomorphism. Hence $f^*$ is an isomorphism
     itself, and $M_1, M_2$ would be of the same rational homotopy type.
    \end{enumerate}
\end{rem}

\begin{rem}
The fact that there exist nilpotent Lie algebras that are isomorphic over $\R$ but not over $\Q$ was noticed already by Lehmann in \cite{Lehmann}. He gave a particular example of two nilpotent $6$-dimensional Lie algebras that
are isomorphic over $\R$ but not over $\Q$.
\end{rem}

\section{Symplectic nilmanifolds}\label{sec:symplectic}

In this section we study which of the above rational homotopy types of nilmanifolds admit a symplectic structure. The subject is important because symplectic nilmanifolds which are not a torus supply a large source of examples of symplectic non-K\"{a}hler manifolds (see for instance \cite{Oprea-Tralle}).

In the $2$-dimensional case we have only the torus $\T^2$ which carries the symplectic area form $\omega=x_1x_2$.

The three $4$-dimensional examples are symplectic. We recall them:
\begin{enumerate}
 \item $dx_i=0$ for $i=1,2,3,4$. Here a symplectic for is given, for instance, by $\omega=x_1x_2+x_3x_4$;
 \item $dx_i=0$ for $i=1,2,3$ and $dx_4=x_1x_2$. Here we can take for example $\omega=x_1x_3+x_2x_4$;
 \item $dx_i=0$ for $i=1,2$, $dx_3=x_1x_2$ and $dx_4=x_1x_3$. Take $\omega=x_1x_4+x_2x_3$.
\end{enumerate}

In the $6$-dimensional case our approach is based on the following simple remark:
if there is a symplectic form, then there is an invariant symplectic form. Let $\omega
\in \wedge^2(x_1,\ldots,x_6)$. We can assume that it has rational coefficients, i.e.
 \begin{equation}\label{eq:551}
 \omega=\sum_{i<j} a_{ij}x_ix_j, \quad a_{ij}\in\Q.
 \end{equation}
In order for it to be a symplectic form, $\omega$ must be closed ($d\omega=0$) and
non-degenerate ($\omega^3\neq 0$). The second condition implies that
$\omega$ must be of the form
 \begin{equation}\label{eqn:552}
 \omega=a_{i_1i_2}x_{i_1}x_{i_2}+a_{i_3i_4}x_{i_3}x_{i_4}+a_{i_5i_6}x_{i_5}x_{i_6}+\omega'\, ,
 \end{equation}
where $i_1,\ldots,i_6$ is a permutation of $1,2,3,4,5,6$. If this is not possible then there is no symplectic
form $\omega$ and hence no symplectic structure on the associated nilmanifold.
We list the symplectic $6$-dimensional nilmanifolds in Table 3. In the first column we mention the
Lie algebra of Table 2 associated to the rational homotopy type of the nilmanifold. In the second column either
we produce an explicit symplectic form for the type, or we say that there does not exist
symplectic structures on it.

\begin{table}[h!]\label{table:3}
\caption{Symplectic $6$-dimensional nilmanifolds}
\begin{tabular}{c|c||c|c}
\toprule[0.04 cm]
\textbf{Type} & \textbf{Symplectic form} & \textbf{Type} & \textbf{Symplectic form}\\
\midrule
$A_6$ & $x_1x_2+x_3x_4+x_5x_6$ & $L_{5,5}\oplus A_1$ & Not symplectic\\
\midrule
$L_3\oplus A_3$ & $x_1x_6+x_2x_3+x_4x_5$ & $L_{6,10}$ & $x_1x_6+x_2x_5-x_3x_4$\\
\midrule
$L_{5,1}\oplus A_1$ & Not symplectic & $L_{6,11}$ & $x_1x_5+x_2x_6+x_3x_4$\\
\midrule
$L_{5,2}\oplus A_1$ & $x_1x_5+x_2x_4+x_3x_6$ & $L_{6,12}$ & $x_1x_6+2x_2x_5+x_3x_4$\\
\midrule
$L_3\oplus L_3$ & $x_1x_5+x_3x_6+x_2x_4$ & $L_{5,4}\oplus A_1$ & $x_1x_3+x_2x_6-x_4x_5$\\
\midrule
$L_{6,1}$ & $x_1x_3+x_2x_6+x_3x_5$ & $L_{6,13}$ & $x_1x_3+x_2x_6-x_4x_5$\\
\midrule
$L_{6,2}$ & $x_1x_6+x_2x_5+x_3x_4$ & $L_{5,6}\oplus A_1$ & $x_1x_3+x_2x_6-x_4x_5$\\
\midrule
$L_4\oplus A_2$ & $x_1x_6+x_2x_5+x_3x_4$ & $L_{6,14}$ & $x_1x_3+x_2x_6-x_4x_5$\\
\midrule
$L_{5,3}\oplus W$ & $x_1x_6+x_2x_4-x_3x_5$ & $L_{6,15}$ & $x_1x_4+x_2x_6+x_3x_5$\\
\midrule
$L_{6,3}$ & Not symplectic & $L_{6,16}$ & $x_1x_6+x_1x_5+x_2x_4+x_3x_5$\\
\midrule
$L_{6,4}$ & $x_1x_4+x_2x_6+x_3x_5$ & $L_{6,17}^+$ & $x_1x_6+x_1x_5+x_2x_4+x_3x_5$\\
\midrule
$L_{6,5}$ & $x_1x_6+x_2x_4+x_3x_5$ & $L_{6,17}^-$ & $x_1x_6+x_1x_5+x_2x_4+x_3x_5$\\

\midrule
$L_{6,6}$ & $x_1x_4+x_2x_6+x_3x_5$ & $L_{6,18}$ & $x_1x_6+x_2x_5-x_3x_4$\\
\midrule
$L_{6,7}$ & Not symplectic & $L_{6,19}$ & $x_1x_6+x_2x_4+x_2x_5-x_3x_4$\\
\midrule
$L_{6,8}^+$ & Not symplectic & $L_{6,20}$ & Not symplectic\\
\midrule
$L_{6,8}^-$ & Not symplectic & $L_{6,21}$ & $2x_1x_6+x_2x_5+x_3x_4$\\
\midrule
$L_{6,9}$ & $x_1x_6+2x_2x_5+x_3x_4$ & $L_{6,22}$ & Not symplectic\\
\bottomrule[0.04 cm]
\end{tabular}
\end{table}

As an example of computations, we show that the nilmanifold $L_{5,5}\oplus A_1$
is not symplectic and also how we constructed one possible symplectic form on $L_{6,9}$.
The minimal model of $L_{5,5}\oplus A_1$ is $(\wedge V,d)$ with
$$
dx_4=x_1x_2, \ dx_5=x_1x_4 \  \ \mathrm{and} \ dx_6=x_2x_4.
$$
It is easy to see that the space of closed elements of degree $2$ is generated by
 $$
 x_1x_2, x_1x_3, x_2x_3, x_1x_4, x_2x_4, x_1x_5,x_2x_5+x_1x_6, x_2x_6 \, ,
$$
so $\omega$ is a linear combination of these terms. But now, according to (\ref{eqn:552}),
the subindices $5,6$ do not go together, and $5$ goes either with $1$ or $2$, whereas
$6$ goes either with $1$ or $2$. This implies that $3,4$ should form a pair, which it is
impossible.

To show that some nilmanifold admits some symplectic structure is much easier: it is enough to find a symplectic form. If we take $L_{6,9}$ we have the minimal model $(\wedge V,d)$ with the following differentials:
 $$
 dx_4=x_1x_2, \ dx_5=x_1x_3 \ \ \mathrm{and} \ dx_6=x_1x_4+x_2x_3.
 $$
Now $d(x_1x_6)=d(x_3x_4)=-x_1x_2x_3$ and $d(x_2x_5)=x_1x_2x_3$. Therefore
 $$
 \omega=x_1x_6+2x_2x_5+x_3x_4
 $$
is closed and we easily see that $\omega^3=12\, x_1x_2x_3x_4x_5x_6\neq 0$. Thus $\omega$ is symplectic.

\section*{Appendix}

This appendix is devoted to the study
of the minimal model of commutative differential graded algebras defined over fields of characteristic $p\neq 2$.
Let $\bk$ be a field of arbitrary characteristic $p\neq 2$.

\begin{teo}
 Any CDGA $(A,d)$ has a Sullivan model: there exist a minimal algebra $(\wedge V,d)$ (in the sense of the definition
 given in the introduction) and a quasi-isomorphism $(\wedge V,d)\to (A,d)$.
\end{teo}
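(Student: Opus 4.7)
The plan is to carry out the classical Sullivan inductive construction, verifying that each step remains valid when $\bk$ has characteristic $p\neq 2$. The only structural feature of the base field used in Sullivan's original argument is the good behaviour of the free graded-commutative algebra, and here it suffices to note that once $p\neq 2$ the relation $v\cdot v=-v\cdot v$ forces $v^2=0$ for any odd-degree element, so $\wedge V=S(V^{\mathrm{even}})\ox \Lambda(V^{\mathrm{odd}})$ without any divided-power corrections. All the cohomological manipulations below (choosing cocycle representatives, lifting coboundaries, using long exact sequences) are purely linear-algebraic and therefore valid over any $\bk$.

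I would construct an increasing tower of minimal sub-CDGAs $(\wedge V_{\leq n},d)$ together with compatible morphisms $\phi_n\colon(\wedge V_{\leq n},d)\to (A,d)$, arranged so that $H^i(\phi_n)$ is an isomorphism for $i\leq n$ and a monomorphism for $i=n+1$. The step from $n$ to $n+1$ proceeds in two substeps. First, for each class in $\mathrm{coker}\,H^{n+1}(\phi_n)$ pick a cocycle representative $\alpha\in A^{n+1}$ and adjoin a new generator $z_\alpha$ in degree $n+1$ with $dz_\alpha=0$ and $\phi(z_\alpha)=\alpha$; call the resulting algebra $(\wedge V',d)$. Second, for each class in the kernel of $H^{n+2}(\phi')\colon H^{n+2}(\wedge V')\to H^{n+2}(A)$ pick a cocycle representative $\beta\in(\wedge V')^{n+2}$ together with $\gamma\in A^{n+1}$ satisfying $d\gamma=\phi'(\beta)$, and adjoin a generator $y_\beta$ in degree $n+1$ with $dy_\beta=\beta$ and $\phi(y_\beta)=\gamma$. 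Passing to the colimit produces $(\wedge V,d)$ together with $(\wedge V,d)\to (A,d)$, which by construction is an isomorphism on every $H^i$.

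To confirm that $(\wedge V,d)$ is minimal in the sense of the introduction, I would well-order the generators first by the stage at which they are introduced and, within a single stage, place the $z_\alpha$'s before the $y_\beta$'s; then every $dx_\tau$ is expressed in terms of strictly preceding generators. The absence of a linear part in $dy_\beta=\beta$ is automatic by a degree count, since $\beta$ has degree $n+2$ while all pre-existing generators live in degree $\leq n+1$, forcing $\beta\in \wedge^{\geq 2}V'$. The main obstacle I anticipate is essentially bookkeeping: making sure the two substeps can be iterated indefinitely while preserving the well-ordering and the inductive cohomological control, and handling the bottom of the tower (for non-connected $A$ one needs a small bootstrap initializing $V^{\leq 0}$ to capture $H^0(A)$). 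Once that is organised, no further restriction on $\mathrm{char}(\bk)$ beyond $p\neq 2$ appears, and existence of a Sullivan minimal model follows.
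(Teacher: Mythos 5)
Your proposal is correct and is essentially the paper's own argument: the proof in the paper consists of the single remark that Sullivan's inductive construction from F\'elix--Halperin--Thomas, Chapter 14, goes through verbatim in characteristic $p\neq 2$, and that is exactly the construction you spell out (the only characteristic-dependent point being, as you note, that odd-degree generators still square to zero once $2$ is invertible). The one place that deserves more than ``bookkeeping'' is the non-simply-connected case: adjoining degree-$1$ generators, and products of old degree-$1$ generators with the newly adjoined $z_\alpha$, $y_\beta$, keep creating new kernel classes in $H^{n+2}$, so each degree requires a countable iteration of your two substeps rather than a single pass --- but this is the same convergence argument as in the characteristic-zero reference and requires no further input from the field.
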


\begin{proof}
The proof of the existence is the same as in the case of characteristic zero, given in (\cite{FHT}, chapter 14).
\end{proof}

Now we want to study the issue of uniqueness of the minimal model. It is not known in general whether if
$(\wedge V,d)\to (A,d)$ and $(\wedge W,d)\to (A,d)$ are two minimal models, then
$(\wedge W,d)\cong (\wedge V,d)$ necessarily. This is known in characteristic zero (\cite{S}),
but it is an open question in positive characteristic $p\neq 2$ (see \cite{Ha}).

Here we give a positive answer for the case of CDGAs with a minimal model generated in degree $1$.
However, some of the results which follow are valid in full generality.

\begin{lemma}\label{lemma2}
Let $(\wedge V,d)$ be a minimal algebra and let $(A,d)$ and $(B,d)$ be two CDGAs. Suppose that $f:(\wedge V,d)\to(A,d)$ is a CDGA morphism and that $\pi:(B,d)\to(A,d)$ is a surjective quasi-isomorphism. Then $f$ can be lifted to a CDGA map $g:(\wedge V,d)\to(B,d)$ such that the following diagram is commutative:
$$
\xymatrix
{
& (B,d)\ar[d]^{\pi}\\
(\wedge V,d)\ar[r]^f\ar@{-->}[ur]^g & (A,d)}
$$
Moreover, if $f$ is a quasi-isomorphism, then so is $g$.
\end{lemma}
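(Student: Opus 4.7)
The plan is to construct $g$ by transfinite induction along the well-ordered basis $\{x_\tau : \tau \in I\}$ of $V$ guaranteed by the definition of a minimal algebra, extending $g$ one generator at a time so that $\pi\circ g = f$ and $g\circ d = d\circ g$ at every stage. The scheme is standard for lifting maps against an acyclic fibration of CDGAs, and the minimality of $(\wedge V,d)$ is exactly what lets the induction proceed freely: each $dx_\tau$ lives in the subalgebra generated by the previously treated $x_\mu$.

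The key preliminary observation is that $K = \ker\pi$ is acyclic. Since $\pi$ is surjective, we have a short exact sequence of cochain complexes
$$ 0 \longrightarrow K \longrightarrow (B,d) \stackrel{\pi}{\longrightarrow} (A,d) \longrightarrow 0, $$
whose associated long exact sequence, combined with the fact that $\pi$ induces isomorphisms on cohomology in every degree, forces $H^*(K) = 0$.

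For the inductive step, suppose $g$ is defined on $\wedge V_{<\tau} := \wedge\langle x_\mu : \mu<\tau\rangle$ with $\pi\circ g = f$ there. The element $a := g(dx_\tau)\in B$ is already defined (since $dx_\tau\in \wedge V_{<\tau}$), is closed (as $g$ commutes with $d$ on $\wedge V_{<\tau}$ and $d^2 x_\tau = 0$), and satisfies $\pi(a)=f(dx_\tau)=d f(x_\tau)$. Using surjectivity of $\pi$, choose any $b_0\in B$ with $\pi(b_0)=f(x_\tau)$; then $a - db_0 \in K$ and is closed, so acyclicity of $K$ yields $c\in K$ with $dc = a - db_0$. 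Setting $g(x_\tau) := b_0 + c$ gives $\pi(g(x_\tau))=f(x_\tau)$ and $d(g(x_\tau)) = a = g(dx_\tau)$, and we extend multiplicatively to $\wedge V_{\leq\tau}$. For the final assertion, when $f$ is a quasi-isomorphism the identity $f = \pi\circ g$ together with the 2-out-of-3 property for quasi-isomorphisms (applied to this composition, knowing $\pi$ and $f$ are quasi-isomorphisms) forces $g$ to be a quasi-isomorphism.

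The only nontrivial ingredient is the acyclicity of $K$, which is precisely where surjectivity of $\pi$ is needed; once this is in hand the lifting becomes a clean generator-by-generator construction, and no further subtlety arises from the positive-characteristic setting since we never divide by integers in the argument.
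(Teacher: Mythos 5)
Your proof is correct and follows essentially the same generator-by-generator inductive lifting scheme as the paper; the only difference is that you package the cohomological input as the acyclicity of $\ker\pi$ (via the long exact sequence of $0\to\ker\pi\to (B,d)\to (A,d)\to 0$), which lets you choose the correcting element directly inside the kernel and thereby shortcuts the paper's two-step correction involving $e$, $\beta$ and $\psi$. The two-out-of-three argument for the final quasi-isomorphism claim is identical to the paper's.
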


\begin{proof}
We work inductively. By minimality, there is an increasing filtration $\{V_\mu\}$
of $V$ such that $d$ maps $V_\mu$ to $\wedge (V_{<\mu})$ ($V_\mu$ is the span of
those generators $x_\tau$ with $\tau\leq \mu$).
Suppose that $g$ has been constructed on $V_{<\mu}$ and consider $x=x_\mu$. Since
$dx\in \wedge (V_{<\mu})$, $g(dx)$ is well defined. We want to solve
  \begin{equation}\label{eqn:ap1}
  \left\{ \begin{array}{l}
           g(dx)=dy \\ f(x)=\pi(y),
          \end{array}
  \right.
  \end{equation}
so that we can set $g(x)=y$.

There is some $b\in B$ such that $\pi(b)=f(x)$. Then
$\pi( g(dx))=f(dx)=d(f(x))=d(\pi(b))=\pi(db)$, so $c=g(dx)-db\in \ker \pi$.
We compute $dc=d(g(dx))=0$, so $c$ is closed.
But $[c] \in H^*(B)\cong H^*(A)$ and $\pi(c)=0$, so $[c]=0$, i.e. there is
some $e\in B$ such that $c=de$. Now $d\pi(e)=\pi(c)=0$, so $\pi(e)$ is closed
and $[\pi(e)]\in H^*(A)\cong H^*(B)$. Hence there is some closed $\beta\in B$ and
$\alpha \in A$ such that $\pi(e)=\pi(\beta)+d\alpha$. Using the surjectivity of
$\pi$ again, $\alpha=\pi(\psi)$, for some $\psi\in B$. So $\pi(e)=\pi(\beta+d\psi)$.
Now take $y=b+e-\beta-d\psi$. Clearly $\pi(y)=\pi(b)=f(x)$ and $dy=db+de=g(dx)$.\\
Now suppose that $f$ is a quasi-isomorphism and denote $f_*$ and $\pi_*$ the maps induced by $f$ and $\pi$ respectively at cohomology level. One has $f=\pi\circ g$, hence $f_*=\pi_*\circ g_*$; thus $g_*=\pi_*^{-1}\circ f_*$ is also an isomorphism.
\end{proof}

Now we particularise to minimal algebras generated in degree $1$. In this case,
we do not need surjectivity to prove a lifting property.

\begin{teo} \label{thm:apx}
Let $(\wedge V,d)$ be a minimal algebra  generated in degree
 $1$ (i.e. $V=V^1$), and let $(A,d)$ and $(B,d)$ be two CDGAs. Suppose that $A^0=\bk$.
If $f:(\wedge V,d)\to(A,d)$ is a CDGA morphism and
$\psi:(B,d)\to(A,d)$ is a quasi-isomorphism, then there exists a CDGA map
$g:(\wedge V,d)\to(B,d)$ such that $\psi\circ g=f$.

Moreover, if $f$ is a quasi-isomorphism, then so is $g$.
\end{teo}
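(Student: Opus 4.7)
My plan is to prove the lifting property directly by induction on the generators of $V$, without going through the trick of first replacing $\psi$ by a surjective quasi-isomorphism (which was the content of Lemma \ref{lemma2}). The point is that because $V=V^1$ and $A^0=\bk$, all obstructions live in the degree-$1$ and degree-$2$ cohomology of $A$, and both can be cleared using only that $\psi$ is a quasi-isomorphism, without surjectivity.

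Well-order the generators $\{x_\tau\}$ of $V$ as in the definition of minimality, and define $V_{<\mu}=\la x_\tau: \tau<\mu\ra$. Assume by induction that $g$ has been constructed on $\wedge V_{<\mu}$ with $\psi\circ g = f$ there. To define $g(x_\mu)$, note that $dx_\mu\in \wedge^2 V_{<\mu}$ because $V=V^1$, so the element $z:=g(dx_\mu)\in B^2$ is defined. It is closed, and satisfies $\psi(z)=f(dx_\mu)=d f(x_\mu)$, so $[\psi(z)]=0$ in $H^2(A)$. Since $\psi_\ast$ is an isomorphism, $[z]=0$ in $H^2(B)$, hence there exists $b\in B^1$ with $db=z$. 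It remains to adjust $b$ so that $\psi(b)=f(x_\mu)$. Set $\eta=\psi(b)-f(x_\mu)\in A^1$; this is closed, so represents a class $[\eta]\in H^1(A)$. Using $\psi_\ast\colon H^1(B)\stackrel{\cong}{\to}H^1(A)$, pick a closed $c\in B^1$ with $[\psi(c)]=[\eta]$, i.e.\ $\psi(c)-\eta=d\alpha$ for some $\alpha\in A^0$. Here is where the hypothesis $A^0=\bk$ is decisive: $d\alpha=0$, so actually $\psi(c)=\eta$ on the nose. Now set $g(x_\mu)=b-c$. Then $d(b-c)=db=g(dx_\mu)$ (since $c$ is closed) and $\psi(b-c)=\psi(b)-\eta=f(x_\mu)$, as required. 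The base case is the same argument with the trivial observation that the smallest generator $x_1$ has $dx_1=0$.

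For the moreover statement, since $f=\psi\circ g$ one has $f_\ast=\psi_\ast\circ g_\ast$ on cohomology. If $f$ is a quasi-isomorphism then $f_\ast$ is an isomorphism, and since $\psi_\ast$ is already an isomorphism, so is $g_\ast=\psi_\ast^{-1}\circ f_\ast$.

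The step I expect to be the main obstacle, and the whole reason the statement is not just a corollary of Lemma \ref{lemma2}, is matching $\psi(b)=f(x_\mu)$ without being able to appeal to surjectivity of $\psi$. The observation that rescues it is that the ambiguity in the cohomological lifting $[\psi(c)]=[\eta]$ lies in $d(A^0)$, and $A^0=\bk$ (together with $d1=0$) forces this ambiguity to vanish; combined with $V$ being concentrated in degree $1$ (so that only $H^1$ and $H^2$ of $A$ ever intervene), the inductive construction closes up without any cylinder or path-object machinery.
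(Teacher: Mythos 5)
Your proposal is correct and follows essentially the same route as the paper's own proof: induction over the generators of $V=V^1$, killing the obstruction $[g(dx_\mu)]\in H^2(B)$ by injectivity of $\psi_*$, and then correcting the lift by a closed element of $B^1$, using $A^0=\bk$ to conclude that $H^1(A)=Z^1(A)$ so that the cohomological surjectivity of $\psi_*$ on $H^1$ gives an exact (not just up-to-coboundary) match $\psi(c)=\eta$. The treatment of the ``moreover'' clause is also identical.
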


\begin{proof}
  We work as in the proof of lemma \ref{lemma2}. Consider generators $\{x_\tau\}$ of
$V=V^1$. Assume that $g$ has been defined for $V_{<\mu}$, and let $x=x_\mu$. Since
$dx\in \wedge^2 (V_{<\mu})$, $g(dx)$ is well defined. As before, we want to solve
(\ref{eqn:ap1}).

Now $d(g(dx))=g(dd(x))=0$, so $[g(dx)]\in H^2(B,d)$. But
$\psi_*[g(dx)]=[\psi( g(dx))]=[f(dx)]=[d(f(x))]=0$, so $[g(dx)]=0$. Therefore, there
exists $\xi\in B^1$ such that $g(dx)=d\xi$. Now $d(\psi(\xi))= \psi(g(dx))=f(dx)=d(f(x))$,
so $\psi(\xi)-f(x)\in A^1$ is closed.
As $A^0=\bk$, we have that $H^1(A,d)=Z^1(A,d)=\ker (d:A^1\to A^2)$. Clearly the quasi-isomorphism $\psi:
(B,d)\to (A,d)$ gives a surjective map $Z^1(B,d)\to Z^1(A,d)$. Therefore,
there exists $b\in Z^1(B,d)\subset B^1$ such that $\psi(\xi)-f(x)=\psi(b)$.
Take $y=\xi-b$, to solve (\ref{eqn:ap1}).

\end{proof}




\begin{lemma}\label{lemma3}
Suppose $\varphi:(\wedge V,d)\to (\wedge W,d)$ is a quasi-isomorphism between minimal algebras.
Then $\varphi$ is an isomorphism.
\end{lemma}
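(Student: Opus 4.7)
The plan is to bootstrap Theorem \ref{thm:apx} twice to produce a two-sided inverse of $\varphi$, rather than attempting to invert $\varphi$ directly via its effect on indecomposables or on the canonical filtration $\{W_k\}$. First, regarding $\mathrm{id}_{\wedge W}:(\wedge W,d)\to(\wedge W,d)$ as a CDGA morphism, I lift it along the quasi-isomorphism $\varphi$ using Theorem \ref{thm:apx}; the hypotheses are satisfied since $(\wedge W,d)$ is a minimal algebra generated in degree $1$ and $(\wedge W)^{0}=\bk$. This yields a CDGA map $g:(\wedge W,d)\to(\wedge V,d)$ with $\varphi\circ g=\mathrm{id}_{\wedge W}$, and the ``moreover'' clause of that theorem guarantees $g$ is itself a quasi-isomorphism, because $\mathrm{id}_{\wedge W}$ trivially is.

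Second, I apply Theorem \ref{thm:apx} again, this time lifting $\mathrm{id}_{\wedge V}:(\wedge V,d)\to(\wedge V,d)$ through the quasi-isomorphism $g$. The hypotheses are met symmetrically, producing $h:(\wedge V,d)\to(\wedge W,d)$ with $g\circ h=\mathrm{id}_{\wedge V}$. A formal one-line manipulation then identifies $h$ with $\varphi$:
\[
h=(\varphi\circ g)\circ h=\varphi\circ(g\circ h)=\varphi,
\]
so $g\circ\varphi=g\circ h=\mathrm{id}_{\wedge V}$. Combined with $\varphi\circ g=\mathrm{id}_{\wedge W}$ from Step~1, $g$ is a two-sided inverse of $\varphi$, whence $\varphi$ is an isomorphism.

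\emph{Main obstacle.} There is no deep technical obstruction once Theorem \ref{thm:apx} is available: the only checks are the hypotheses of that theorem at each of the two applications, namely that the source is a minimal algebra generated in degree $1$ and the target has degree-$0$ part $\bk$, both of which are automatic here ($\wedge V$ and $\wedge W$ are generated in degree $1$ by the standing convention of the appendix, and free commutative graded algebras on positively graded generators have $A^{0}=\bk$). The conceptual pitfall to avoid is trying to prove injectivity of $\varphi$ on $V$ and surjectivity onto $W$ by inducting through the filtration $\{W_k\}$, which in positive characteristic requires extra care with $\wedge^2$-functoriality; the two-fold lifting bypasses this entirely.
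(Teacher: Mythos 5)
Your two-fold lifting argument is formally valid and non-circular (Theorem \ref{thm:apx} is proved in the paper before and independently of Lemma \ref{lemma3}), and in the special case where both $(\wedge V,d)$ and $(\wedge W,d)$ are generated in degree $1$ it is a clean alternative: producing a two-sided inverse from two applications of the lifting theorem is slicker than the paper's direct induction. However, there is a genuine gap: Lemma \ref{lemma3} is stated for \emph{arbitrary} minimal algebras, and there is no ``standing convention of the appendix'' restricting to degree $1$ --- the appendix explicitly remarks that some of its results are valid in full generality, and this lemma is one of them (its proof in the paper is an induction over the degree $n$ of the generators). Your very first step already invokes the missing hypothesis: lifting $\mathrm{id}_{\wedge W}$ along $\varphi$ via Theorem \ref{thm:apx} requires the \emph{source} of the lift, namely $(\wedge W,d)$, to be a minimal algebra generated in degree $1$, which is not among the hypotheses of Lemma \ref{lemma3}.

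This restriction is not cosmetic. The lemma is applied in Theorem \ref{cor:ap3} to a quasi-isomorphism $g:(\wedge V,d)\to(\wedge W,d)$ in which $(\wedge V,d)$ is generated in degree $1$ but $(\wedge W,d)$ is merely \emph{some other} minimal model of $(A,d)$; that $\wedge W$ is then also generated in degree $1$ is a consequence of the uniqueness being proved, not an input to it. In positive characteristic one cannot appeal to uniqueness of minimal models to force $W=W^1$, so your version of the lemma could not be used where it is needed. The paper instead argues directly: induction on the degree $n$, injectivity on indecomposables from minimality (a closed generator that becomes exact would have to lie in the decomposable image of $d$), and surjectivity from a long exact sequence comparing the subcomplexes built from the minimal filtrations of $V^n$ and $W^n$; this works over any field of characteristic $\neq 2$. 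If you add to the lemma the hypothesis that both algebras are generated in degree $1$, your proof is complete and correct, but the statement is then too weak for the appendix's purpose.
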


\begin{proof}
We can assume inductively that $\wedge (V^{< n})\cong \wedge (W^{< n})$. 
We first show that $\varphi:\wedge (V^{\leq n})\to \wedge (W^{\leq n})$ is injective.
It is enough to see that the composition $\bar\f: V^n\to (\wedge W^{\leq n})^n \to W^n$
is injective. Suppose $v\in V^n$ satisfies $\bar\f(v)=0$. Then there exists $v'
\in \wedge (W^{< n})\cong\wedge (V^{< n})$ such that $\f(v)=\f(v')$. Then $\f(v'')=0$,
where $v''=v-v'$.
Then
 $$
 0=d(\varphi(v''))=\varphi(dv'').
 $$
 Thus $dv''=0$. 
 Since $\varphi$ is a quasi-isomorphism and $\varphi^*[v'']=0$, we have that $v''=d(v''')$
 for some $v'''\in(\wedge V)^{n-1}$; but this is impossible since $\wedge V$ is a
 minimal algebra.

 Now we prove the surjectivity of $\varphi:\wedge (V^{\leq n})\to \wedge (W^{\leq n})$.
 First note that the minimality condition means the existence of an increasing
 filtration $V^n_i$  such that $d(V^n_i) \subset \wedge
 (V^{<n} \oplus V^n_{i-1})$ (and an analogous filtration $W^n_i$ for $W^n$).
 We assume by induction that $\wedge(V^{<n} \oplus V^n_{i-1}) \cong
 \wedge(W^{<n} \oplus W^n_{i-1})$. Consider
 $$
 \cV_i = V^n_i \oplus \wedge (V^{<n} \oplus V^n_{i-1}).
 $$
 These are differential vector
subspaces. Write $\cV_i \inc \wedge V \to C$, where $C$ is the cokernel. Then $C$ has
only terms of degree $\geq n$. Moreover if we take the filtration with $V^n_i$ maximal (i.e. $\cV_i=d^{-1}(\wedge(V^{<n}\oplus V^n_{<i}$)), then $H^n(C)=0$. This implies that $H^{\leq n}(\cV_i)\cong H^{\leq n}(\wedge V)$ and
$H^{n+1}(\cV_i)\inc H^{n+1}(\wedge V)$.

We define analogously $\cW_i = W^n_i \oplus \wedge (W^{<n} \oplus W^n_{i-1})$. Clearly
$\varphi:\cV_i \to \cW_i$.
We have an exact sequence $0\to \cV_i \to \cW_i \to Q \to
0$, where $Q=W^n_i/V_i^n$ is the cokernel. Again, $Q$ does not have terms of degree $<n$. Also
$d$ on $Q^n$ is zero, so $H^n(Q)=Q^n$.
Note that the isomorphism $H^*(\wedge V)\cong
H^*(\wedge W)$ implies that $H^{\leq n}(\cV_i)\cong H^{\leq n}(\cW_i)$ and
$H^{n+1}(\cV_i)\inc H^{n+1}(\cW_i)$. The long exact sequence in cohomology gives
$H^n(Q)=Q^n=0$, and hence $\cV_i \cong \cW_i$, which completes the induction.
\end{proof}

This gives us the uniqueness of the minimal model for the CDGAs that we are interested in.

\begin{teo} \label{cor:ap3}
 Let $(A,d)$ be a CDGA, defined over a field $\bk$ of characteristic $p\neq 2$, such that $A^0=\bk$. Suppose that its
 minimal model $\varphi:(\wedge V,d)\to (A,d)$ satisfies that $(\wedge V,d)$ is a minimal algebra generated in degree
 $1$. If $(\wedge W,d)\to (A,d)$ is another minimal model for
 $(A,d)$, then $(\wedge W,d)\cong (\wedge V,d)$.
\end{teo}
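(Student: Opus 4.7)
The plan is to obtain the isomorphism by lifting one minimal model through the other using Theorem~\ref{thm:apx}, and then invoking Lemma~\ref{lemma3} to upgrade the resulting quasi-isomorphism to an isomorphism.

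More precisely, suppose we have two minimal models
\[
\varphi:(\wedge V,d)\to (A,d),\qquad \psi:(\wedge W,d)\to (A,d),
\]
where $(\wedge V,d)$ is generated in degree $1$ and $A^0=\bk$. Since $\psi$ is a quasi-isomorphism, we may apply Theorem~\ref{thm:apx} to $\varphi$ and $\psi$: because the source $(\wedge V,d)$ satisfies $V=V^1$, we obtain a CDGA map $g:(\wedge V,d)\to (\wedge W,d)$ with $\psi\circ g=\varphi$. The ``moreover'' clause of Theorem~\ref{thm:apx} gives that $g$ is itself a quasi-isomorphism (as $\varphi$ is one).

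At this point $g$ is a quasi-isomorphism between two minimal algebras, so Lemma~\ref{lemma3} applies directly and yields that $g:(\wedge V,d)\to(\wedge W,d)$ is an isomorphism. This is exactly the conclusion we wanted, so the proof essentially reduces to combining the two previously established results.

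The only subtlety, and the main thing to be careful about, is that Theorem~\ref{thm:apx} requires that the \emph{source} of the lifting problem be generated in degree $1$; this is why we set up the lift with $(\wedge V,d)$ as the source (which we know is generated in degree $1$ by hypothesis) rather than with $(\wedge W,d)$, about whose generators we a priori know nothing. Conversely, Lemma~\ref{lemma3} is insensitive to this, so no assumption on $W$ is needed; the isomorphism $g$ will in hindsight force $(\wedge W,d)$ to also be generated in degree $1$. No additional computation is required.
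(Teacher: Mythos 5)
Your proof is correct and is essentially identical to the paper's: both lift $\varphi$ through the quasi-isomorphism $(\wedge W,d)\to(A,d)$ via Theorem~\ref{thm:apx} to get a quasi-isomorphism $g:(\wedge V,d)\to(\wedge W,d)$, and then apply Lemma~\ref{lemma3} to conclude $g$ is an isomorphism. Your remark about why $(\wedge V,d)$ must be the source of the lift is a correct and worthwhile clarification, but the argument itself matches the paper's.
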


\begin{proof}
 By Theorem \ref{thm:apx}, there exists a quasi-isomorphism $g:(\wedge V,d)\to (\wedge W,d)$. By Lemma \ref{lemma3},
$g$ is an isomorphism.
\end{proof}

We have the following refinement.

\begin{cor}
Consider the category of CDGAs \ $(A,d)$ with $A^0=\bk$ and whose minimal model is
generated in degree $1$.
If two of such CDGAs \ $(A,d)$ and $(B,d)$ are quasi-isomorphic, then they have the same minimal model.
\end{cor}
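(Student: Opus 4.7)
The plan is to reduce the statement to the uniqueness of minimal models (Theorem \ref{cor:ap3}) by transporting the minimal model of $(A,d)$ to become a minimal model of $(B,d)$. Let $\varphi_A\colon(\wedge V_A,d)\to(A,d)$ and $\varphi_B\colon(\wedge V_B,d)\to(B,d)$ denote the minimal models of the two CDGAs. Since $B^0=\bk$ and $(\wedge V_A,d)$ is generated in degree $1$, Theorem \ref{cor:ap3} applied to $(B,d)$ will immediately give $(\wedge V_A,d)\cong(\wedge V_B,d)$ as soon as one exhibits a single quasi-isomorphism $(\wedge V_A,d)\to(B,d)$. All the work therefore lies in producing such a quasi-isomorphism out of the data of the quasi-isomorphism between $(A,d)$ and $(B,d)$.

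The construction is straightforward for a direct quasi-isomorphism. If $\psi\colon(A,d)\to(B,d)$ is a q.i., then $\psi\circ\varphi_A$ is a composition of two q.i.'s, hence the desired q.i.\ $(\wedge V_A,d)\to(B,d)$. If instead $\psi\colon(B,d)\to(A,d)$ is a q.i., Theorem \ref{thm:apx}, whose hypothesis $A^0=\bk$ is precisely the assumption placed on $A$, furnishes a lift $g\colon(\wedge V_A,d)\to(B,d)$ of $\varphi_A$ through $\psi$. Its ``moreover'' clause guarantees that $g$ is itself a quasi-isomorphism. In either direction of arrow, Theorem \ref{cor:ap3} then closes the argument.

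To handle a longer zig-zag $(A,d)=(C_0,d)\leftrightarrow(C_1,d)\leftrightarrow\cdots\leftrightarrow(C_n,d)=(B,d)$ of quasi-isomorphisms, I would proceed by induction on $n$, constructing at each stage a quasi-isomorphism $(\wedge V_A,d)\to(C_i,d)$ by iterating the same two moves: compose along a forward arrow, or lift along a backward arrow via Theorem \ref{thm:apx}. Once $i=n$ is reached, invoke Theorem \ref{cor:ap3} for $(B,d)$ as above.

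The main obstacle in this inductive argument is that Theorem \ref{thm:apx} requires the \emph{target} of the q.i.\ through which we lift to have degree-zero component equal to $\bk$. This holds for the endpoints $A$ and $B$ by hypothesis but need not hold for the intermediate CDGAs $(C_i,d)$ in the chain, even though $H^0(C_i,d)=\bk$ is forced by the zig-zag. I would resolve this by first replacing each intermediate $(C_i,d)$ with a quasi-isomorphic ``connected'' CDGA whose degree-zero component is exactly $\bk$ (a standard reduction in rational homotopy theory, obtained by passing to an appropriate augmented sub-CDGA), so that Theorem \ref{thm:apx} applies at every step of the induction.
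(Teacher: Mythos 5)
Your treatment of the case of a single quasi-isomorphism is exactly the paper's proof: compose $\varphi_A$ along a forward arrow $A\to B$, or lift it through a backward arrow $\psi\colon B\to A$ via Theorem \ref{thm:apx} (whose hypothesis $A^0=\bk$ holds by assumption), and then apply Theorem \ref{cor:ap3} to $(B,d)$; the paper disposes of the choice of direction with a ``without loss of generality'' and stops there. Your additional paragraph on zig-zags addresses a point the paper silently elides, and your diagnosis of the obstacle is correct: Theorem \ref{thm:apx} requires the degree-zero part of the \emph{target} of each backward arrow to be $\bk$, which need not hold for the intermediate terms $C_i$. However, the repair you invoke is not as routine as you suggest. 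The naive connected truncation $\bk\oplus C_i^{\geq 1}$ is indeed a sub-CDGA, but its inclusion into $C_i$ is in general \emph{not} a quasi-isomorphism: it computes $H^1$ as all of $Z^1(C_i)$ rather than $Z^1(C_i)/d(C_i^0)$, so it fails whenever $d$ is nonzero on $C_i^0$. Nor can one simply substitute each $C_i$ by its minimal model, since those minimal models need not be generated in degree $1$ and hence lie outside the scope of Theorem \ref{thm:apx}. Since the published proof only handles the one-arrow case, this does not put you behind it; but if you want the full zig-zag statement, the connected replacement has to be constructed with more care than an ``appropriate augmented sub-CDGA.''
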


\begin{proof}
Without loss of generality, we may assume that there is a quasi-isomorphism $\psi:(B,d)\to (A,d)$.
If $\varphi:(\wedge V,d)\to (A,d)$ is a minimal model for $(A,d)$ then there exists
a quasi-isomorphism $g:(\wedge V,d)\to (B,d)$. Any other minimal model of $(B,d)$ is isomorphic to
$(\wedge V,d)$ by Theorem \ref{cor:ap3}.
\end{proof}


\end{document}